\newtheorem{theorem}{Theorem}%[section]
\newtheorem{lemma}{Lemma}
\newtheorem{corollary}{Corollary}
\newtheorem{proposition}{Proposition}
\newtheorem{remark}{Remark}
\theoremstyle{remark}
\def\Q{\mathbb{Q}}
\def\R{\mathbb{R}}
\def\P{\mathbb{P}}
\renewcommand{\phi}{\varphi}
\renewcommand{\epsilon}{\varepsilon}
\newcommand{\1}{{\text{\Large $\mathfrak 1$}}}
\renewcommand{\limsup}{\varlimsup}
\renewcommand{\liminf}{\varliminf}
\definecolor{mygray}{gray}{0.9}
\definecolor{deeppink}{RGB}{255,20,147}
\definecolor{mygreen}{rgb}{0.05, 0.576, 0.03}
\definecolor{myred}{rgb}{0.768, 0.09, 0.09}
\long\def\symbolfootnote[#1]#2{\begingroup
\def\thefootnote{\fnsymbol{footnote}}\footnote[#1]{#2}\endgroup}
\begin{document}

\title{A generalization of Kingman's model of selection and mutation and the Lenski experiment}
\author[]{Linglong Yuan\footnote{Linglong Yuan gratefully acknowledges support by DFG priority programme SPP 1590 {\it Probabilistic structures
in evolution}}}
\affil[]{Institute of Mathematics\\Johannes Gutenberg University Mainz}
%\email{yuanlinglongcn@gmail.com}
%\affil[]{Department of Mathematics\\ Uppsala University}
%\date{October 2014}
%\fntext[grant]{Linglong Yuan gratefully acknowledges support by DFG priority programme SPP 1590 {\it Probabilistic structures
%in evolution}}
\date{\today}
\maketitle

\begin{abstract}
Kingman's model of selection and mutation studies the limit 
type value distribution in an asexual population
of discrete generations and infinite size undergoing 
selection and mutation. This paper generalizes the model to analyse the long-term evolution of 
Escherichia. coli in Lenski experiment. Weak assumptions for fitness functions 
are proposed and the mutation mechanism is the same as in Kingman's model.
General macroscopic epistasis are designable through
fitness functions.  Convergence to the unique limit type distribution is obtained.  
\\
\\
 %\symbolfootnote[0]{$\!\!\!^*$Supported by Swedish Research
%Council grant 2013-4688}

\noindent \textit{Keywords:} Selection-Mutation balance, House of Cards, Type distribution, Fitness function, Experimental evolution, Lenski experiment, Escherichia. coli, Condensation.\\
\\
\textit{MSC Subject classification:} 92D10 (primary), 37N25, 60F05, 92D15 (secondary)

\end{abstract}

\section{Introduction}
Evolutionary forces in a population vary from macroscopic scale to microscopic scalebincluding random environment, migration, natural selection,  macroscopic epistasis (or individual interaction),
microscopic epistasis,  
 and linkage and dominance, clonal interference, mutation, genetic drift, 
recombination, and so on. 
Recently, mathematicians are interested in incorporating as many factors
as possible in an evolutionary model, either deterministic or stochastic,
 to understand the contribution of each factor
and to see which state the model can reach in the limit (see for example 
\cite{Age, NFM1, NFM2} among numerous works). However one would expect a high level of
 complexity of modelling and analysis when many factors enter into play. 

Kingman \cite{K78} suggested that one can regard an equilibrium of the evolutionary model
as existing because of 
two preponderant factors, other phenomena causing perturbations of the equilibrium. 
The pair of factors in his model are selection and mutation. This particular case
had also been the subject of study of Moran (\cite{1M76, 2M76,M77}) almost at the same time.

More specifically, Kingman \cite{K78} proposed a one-locus, discrete generation 
model under selection and mutation with an infinite number of possible 
alleles which have continuous effects on a quantitative type. 
The continuum-of-alleles models were introduced by 
Crow and Kimura \cite{CK64} and Kimura \cite{K65} and are used frequently in
quantitative genetics, since types usually have a polygenetic basis.

Kingman's idea can be applied to model the Lenski experiment which 
investigates the long term evolution of E.coli in the laboratory. 
Indeed, the application goes to various evolutionary models and one major parameter is 
how selection influences the population. That generates many variants of 
Kingman's model and a general treatment is required.%, see some previous works by B\"urger \cite{B89, B98}.

The paper aims to establish a general model which covers the Kingman's setting and 
can be applied to Lenski experiment. In section 2, we show briefly the Kingman's model and the main observations in
Lenski experiment. This section is the motivation of the paper, but the reader can skip it for the first reading since
we will come back for applications. 
In section 3, we introduce a general setting with 3 assumptions on the fitness function. We give the main results for the general model when some or all assumptions hold. Section 4 is devoted to  
proofs and in section 5, we show the applications to Kingman's model and Lenski experiment. Section 6 
summarizes the main contribution of the paper and discusses the comparison of our model with other works, especially with \cite{GKWY} on Lenski experiment.   

\section{Kingman's model and Lenski experiment}
\subsection{Kingman's model}
  The model considers an effectively infinite population that reproduces 
 asexually and has discrete generations. 
 It studies a specific type and the selection influences the population through the fitness which is the 
   offspring size and  depends (possibly not only) on its type value $x$,
 a real number in the space $\mathcal{M}:=[0,M]\subset \R^+_0$ where $M$ is a positive real number.
   Let $\P(\mathcal{M})$ denote the set of probability measures on $\mathcal{M}$. For any $u\in\P(\mathcal{M})$,  let $m_{u}$ denote the upper limit of the support of $u$, i.e., $ m_{u}=\sup\{x: x\in\mathcal{M}, u([x-\epsilon,x])>0, \forall 0<\epsilon<x\}$. So $m_{u}$ is the largest type value an individual can take in a population with type distribution $u$. 

%\item The fitness value (viability) of an individual is determined by its type value $x$ through the fitness function $w(x)$ with $w(x)\geq 0$ for any $x\in\mathcal{M}$; In Kingman's model, $w(x)=x$;
 
 Assume that 
 each individual per generation 
 mutates independently with probability $\beta$ ($0<\beta<1$) and the mutant type distribution is 
 the probability measure $q$ on $\mathcal{M}$, independent of parent's type. Kingman \cite{K78,K77} argued that the tendency for most mutations to be deleterious  might be reflected in a model in which the gene after mutation is independent of that before, the mutation having destroyed the biochemical ``house of card" built up by evolution. 
 
 The fitness function in this model is $x\mapsto x, x\in\mathcal{M}.$
Let $(p_i)_{i\geq 0}$ denote the sequence of type distributions of generations $i$ on $\mathcal{M}$ with $p_0$
given as a parameter;
Then $(p_i)_{i\geq 0}$ are defined recursively: 
 \begin{equation}\label{king}p_i(dx)=(1-\beta)\frac{xp_{i-1}(dx)}{\int xp_{i-1}(dx)}+\beta q(dx), i\geq 1. \end{equation}
 
 { In particular, we set $p_i(dx)=(1-\beta)p_{i-1}(dx)+\beta q(dx)$, if $p_{i-1}=\delta_0$, the Dirac measure at $0$.}

%One can interpret this model as that: At each generation $i$, the total population has mass $1$ and the amount of individuals with type $x$ is $p_i(dx)$. So in this sense, it is a constant-size population model.  

\begin{remark}
Due to the expression of (\ref{king}), it is clear that $m_{p_i}\leq \max\{m_{p_0},m_q\}$ for any $i\geq 0$. So letting 
$M=\max\{m_{p_0},m_q\}$ and ${\mathcal{M}}=[0,\max\{m_{p_0},m_q\}]$ does not change any $p_i$. Since $m_q\leq m_{p_1}$, one can assume $m_q\leq m_{p_0}$, otherwise we take $p_1$ as $p_0$. 
For convenience, we introduce:

Convention $(*)$: 
$$m_q\leq m_{p_0}, M=m_{p_0} \text{ and }{\mathcal{M}}=[0, m_{p_0}].$$
\end{remark}
If a sequence of measures (not necessarily probability measures) $(h_i)_{i\geq 0}$ converges in total variation sense to a measure $h$, that is, the total variation of $h_i-h$ tending to $0$, then for abbreviation, we say $(h_i)_{i\geq 0}$ converges strongly to $h$. 

Kingman specifically takes $M=1$ in his model. Based on the value of $\int\frac{q(dx)}{1-x}$, Kingman (\cite{K78}) proved that:\\

\begin{theorem}[Kingman]\label{King}
\textit{Case $1$}:
 $\int \frac{q(dx)}{1-x}>\beta^{-1}.$  Then $(p_i)_{i\geq 0}$ converges strongly to 
$$p^*(dx)=\frac{\beta sq(dx)}{s-(1-\beta)x},$$
with $s$ being the unique solution of $\int\frac{\beta sq(dx)}{s-(1-\beta)x}=1$.\\
\textit{Case 2}: $\int \frac{q(dx)}{1-x}\leq \beta^{-1}$.
Then $(p_i)_{i\geq 0}$ converges weakly to 
$$p^*(dx)=\frac{\beta q(dx)}{1-x}+(1-\int\frac{\beta q(dy)}{1-y})\delta_{1}(dx),$$ 
here $\delta_1(dx)$ is the Dirac measure at $1.$ 
\end{theorem}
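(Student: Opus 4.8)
The plan is to first unfold the recursion (\ref{king}) so that $p_i$ is expressed in closed form in terms of $p_0$, $q$, and the partial products of the normalizing constants $c_j:=\int x\,p_{j-1}(dx)$. Iterating once gives $p_i(dx)=\beta q(dx)+(1-\beta)\frac{x}{c_i}p_{i-1}(dx)$, and iterating fully yields
\begin{equation*}
p_i(dx)=\sum_{k=1}^{i}\beta(1-\beta)^{i-k}\Bigl(\prod_{j=k+1}^{i}\frac{x}{c_j}\Bigr)q(dx)+(1-\beta)^{i}\Bigl(\prod_{j=1}^{i}\frac{x}{c_j}\Bigr)p_0(dx).
\end{equation*}
Writing $\pi_i:=\prod_{j=1}^{i}c_j$, the measure $p_i$ is absolutely continuous with respect to $q+p_0$ off the point $m_{p_0}$, with an explicit density that is an increasing function of $x$ for each fixed $i$. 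The whole argument then reduces to controlling the scalar sequence $(c_i)$, equivalently the product $(\pi_i)$, and showing it converges to the right limit $s$ (Case 1) or to $1-\beta$ scaled appropriately, i.e. to the value forcing condensation at $m_{p_0}=1$ (Case 2).

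The key analytic step is a monotonicity/fixed-point analysis of $c_i$. One shows the map $\Phi$ sending a candidate limiting normalizer $s$ to $\int x\cdot\frac{\beta s\,q(dx)}{s-(1-\beta)x}$ (the mean of the putative stationary measure $p^*$) is continuous and strictly monotone in $s$ on $(1-\beta,\infty)$, with a unique fixed point exactly when $\int\frac{q(dx)}{1-x}>\beta^{-1}$; when the integral is $\le\beta^{-1}$ there is no interior fixed point and $c_i\uparrow 1-\beta\cdot(\text{something})$, with the ``lost mass'' escaping to the top of the support. I would prove $c_i$ is monotone (say, squeeze it between the mean under $q$ and $m_{p_0}$, then show it moves monotonically toward the fixed point using the concavity of $x\mapsto x/(s-(1-\beta)x)$) and bounded, hence convergent; its limit must be a fixed point of the scalar equation when one exists. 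Standard total-variation estimates then upgrade pointwise convergence of the densities (dominated by an integrable envelope since $x/c_j\le m_{p_0}/c_j$ is eventually bounded away from blowing up in Case 1) to strong convergence in Case 1.

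For Case 2, the density $\frac{\beta q(dx)}{s-(1-\beta)x}$ with $s=1-\beta$ becomes $\frac{\beta q(dx)}{1-x}$ on $[0,1)$, which has total mass $\int\frac{\beta q(dy)}{1-y}\le 1$; the deficit $1-\int\frac{\beta q(dy)}{1-y}$ must be accounted for by mass concentrating near $x=1$. Here one cannot have strong convergence because the limit is singular at $1$: instead I would test against continuous functions, show $\int f\,p_i\to\int f\,p^*$ by splitting $[0,1]$ into $[0,1-\delta]$ (where the density converges nicely and uniformly) and $[1-\delta,1]$ (where one tracks the escaping mass via the tail behavior of $\prod_j (x/c_j)$ as $c_j\to 1-\beta$), then let $\delta\to0$.

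The main obstacle I anticipate is the borderline regime — proving $c_i$ actually converges (rather than merely that its limit points are fixed points) and pinning down the exact rate at which mass escapes to $1$ in Case 2. Monotonicity of $(c_i)$ is plausible but delicate: it hinges on an inequality relating $c_{i+1}$ to $c_i$ that must be derived from the concavity of the relevant fractional-linear map together with the position of $c_i$ relative to the fixed point, and sign-tracking there is the crux. Once monotone convergence of $(c_i)$ is in hand, everything else is a routine, if somewhat lengthy, limiting argument in the weak or total-variation topology.
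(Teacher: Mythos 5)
Your plan is essentially Kingman's original route (unfold the recursion and reduce everything to the scalar normalizers $c_i=\int x\,p_{i-1}(dx)$), which is genuinely different from this paper's route: here Theorem \ref{King} is recovered as a corollary of the general framework by checking Assumptions 1--3 for $w(x,u)=x$, invoking Theorems \ref{c1}, \ref{c11}, \ref{c2}, \ref{c3} for convergence to a unique $p^*$ independent of $p_0$, identifying $p^*$ through the fixed-point relation of Corollary \ref{limit=} and Lemma \ref{item}, and upgrading to strong convergence in Case 1 via Corollary \ref{forking}. The paper's key device is to run the iteration from $\hat p_0=\delta_M$, where the iterates are monotone in the component order, and then sandwich an arbitrary $p_0$ between that sequence and truncated versions; this is precisely how it avoids the issue below.

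The genuine gap sits exactly where you flag the crux: the sequence $(c_i)$ is \emph{not} monotone for general $p_0$, and it does not ``move monotonically toward the fixed point,'' so the squeeze-plus-concavity plan fails as stated. Concretely, take $\beta=\tfrac{1}{2}$, $q=\delta_{1/2}$, $p_0=\tfrac{1}{2}\delta_0+\tfrac{1}{2}\delta_1$. Then $c_1=\tfrac{1}{2}$; next $p_1=\tfrac{1}{2}\delta_1+\tfrac{1}{2}\delta_{1/2}$ gives $c_2=\tfrac{3}{4}$; and $p_2=\tfrac{1}{3}\delta_1+\tfrac{2}{3}\delta_{1/2}$ gives $c_3=\tfrac{2}{3}$. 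The means go $\tfrac{1}{2}\to\tfrac{3}{4}\to\tfrac{2}{3}$, while the limit mean here is $\tfrac{1}{2}$ (the limit is $\delta_{1/2}$): the sequence starts at the limiting value, overshoots, and returns, so both monotonicity and monotone approach to the fixed point are false. Without this, your argument only yields that limit points of $(c_i)$ are fixed points of the scalar map, which is insufficient (and in Case 2 there is no interior fixed point at all, so even that fallback degenerates: the relevant value is the boundary point where the normalization interacts with mass escaping to $1$). Closing the gap requires a different mechanism --- Kingman's $\liminf/\limsup$ comparison on the unfolded product, or this paper's stochastic-domination sandwich starting from $\delta_M$. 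A smaller inaccuracy: your claim that in Case 2 ``one cannot have strong convergence'' because the limit is singular at $1$ is wrong as a general statement --- if $q(\{1\})>0$ then Theorem \ref{c11} gives strong convergence despite the atom at $1$; this does not affect the statement being proved (which only asserts weak convergence in Case 2), but the reasoning should not rely on it.
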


Therefore the sequence $(p_i)_{i\geq 0}$ converges at least in the weak sense to a limit distribution $p^*$ which depends only on $q$ and $\beta$, regardless of the specific form of $p_0$.  Biologically, it can be seen as a stability property of the population. 

Next we introduce the Lenski experiment and use an iteration similar to (\ref{King}) to model the evolution of E.coli.
\subsection{Lenski experiment and modelling}

The Lenski experiment is a long-term evolution experiment with E.coli, founded by Richard E. Lenski in 1988 in the laboratory. The experiment is decomposed into {\it daily cycles}. Every day starts by sampling approximately $5\cdot 10^6$ bacteria from those available in the medium that was used the previous day. This sample is then transferred to a new glucose-limited minimal medium and reproduce (asexually) until the medium is deployed, i.e., when there is no more glucose available. Around $5\cdot 10^8$ cells are present at the end of each day. So the size grows by approximatively $100$ times from the beginning of a day to the end and a sample of percentage around $1\%$ will be chosen for the next day. The closely $30$-year ongoing experiment has run more than $60,000$ generations. We refer to \cite{GKWY} for a more detailed presentation and references therein. 

There are $12$ populations founded from a common ancestor. Samples, called by Lenski ``fossil record'', are frozen every 500 generations. Once bacterium is frozen, we consider it stopping biological activities inside the body, which is how  the name ``fossil record'' makes sense. The records are regarded as stocked information of evolutionary trajectories of populations. 

They define the fitness as the dimensionless ratio of the competitors' realized reproduction rates. Basically, we let two populations of the same number of individuals, one of the founder ancestors and one of evolved strain, to be together in a medium at the beginning of a day. The fitness of the evolved strain is the ratio of the (exponential) reproduction rate of the strain observed at the end of the day and the reproduction rate of the ancestor strain. So in this definition, fitness is a {\it relative} quantity that measures the reproduction rate of the whole population. However mathematically, one can directly model the natural (non-relative) reproduction rate of each bacterium.  To unify notations, we shall consider the natural reproduction rate as the type and our fitness, different from that of Lenski, is the offspring size in the next generation. 

Wiser et al \cite{W13} showed that the (relative) reproduction rate increases but decelerates.  They compared the hyperbolic and sublinear power law increasing models, the former having a bound and the latter none.  It turns out that the hyperbolic model fits to the first $10,000$ generations, but for a long term about $50,000$ generations, the sublinear power law model is more significant. 

The unboundedness of the sublinear power law curve can be explained by the fact that  highly beneficial mutations happen rarely but consistently and with probability $1$ some of them fixate the population, although after a probably very long time. Therefore, on a very long period of time, one can consider that there is a bound (or even a pattern) for the reproduction rates of new mutants. 

More specifically, let $p_0$ be the initial type distribution (or reproduction rate distribution) and $q$ the mutant type distribution such that $0\leq m_q\leq m_{p_0}<\infty$. Let $M,\mathcal{M}$ be defined from $p_0, q$ by convention $(*)$. 
Let the population grow exponentially according to the reproduction rate of each individual until the total amount 
reaches the capacity $\gamma (\gamma>1)$ (assuming the initial amount $1$). In Lenski experiment, $\gamma\approx 100$. We then sample $1/\gamma$ proportion of the population at the end of day $0$ to constitute the population at the beginning of day $1$. However, we have new mutants arriving along the whole day. To combine the mutation and selection together, we assume that a $\beta (0<\beta<1)$ proportion of the sample consists of mutation population with type distribution  $q$ and the rest $1-\beta$ proportion stays unchanged.  For any $u\in\P(\mathcal{M})$, let $t_{u}$ be the unique solution of $\int e^{t_ux}u(dx)=\gamma$. The type distribution $p_i$ at generation (the beginning of day) $i$ is determined as follows: 
\begin{equation}\label{lenskim}
\text{\bf Lenski model :\,\,\,\,\,\,\,\,}p_i(dx)=(1-\beta)\frac{e^{t_{p_{i-1}}x}}{\gamma}p_{i-1}(dx)+\beta q(dx),\,i\geq 1, x\in \mathcal{M}.\end{equation}
Given type distribution $p_{i-1}$, $t_{p_{i-1}}$ is the time the population needs to reach capacity $\gamma$ within one day. 
The fitness function is given by $x\mapsto e^{t_{p_{i-1}}x},x\in \mathcal M$.  A division by $\gamma$ represents the uniform sampling of proportion $1/\gamma$ from the population at the end of the day. 

It turns out that Kingman's method cannot be simply applied to (\ref{lenskim}). To find a solution, we will establish a more general model which comprises (\ref{King}) and (\ref{lenskim}) and find a general solution using a different approach. 
%\subsection{Organzation}

%In section 2, we firstly introduce the general model. .  Main results are subsequently given in section 2. Section 3 is devoted to proofs of main results.  In the section 4, in the first part, we show that Kingman's model satisfies the three assumptions and Theorem \ref{King} is completely recovered. In the second part, we give a brief description of Lenski experiment and study a Lenski model by using the general framework. 

\section{General model and main results}
\subsection{Modelling}

Let $p_0$ be the initial type distribution and $q$ the mutant type distribution such that $0\leq m_q\leq m_{p_0}<\infty$. Let $M,\mathcal{M}$ be defined from $p_0,q$ by convention $(*)$. Let $\beta (0<\beta<1)$ be the probability of mutation per capita per generation.  In the rest of the paper, $M, \mathcal{M}, \beta$ are fixed. 

Let $w(x,u)$ be the (nonnegative) fitness function of an individual with type value $x\in\mathcal{M}$ and type distribution $u\in\P(\mathcal{M})$. Assume that given $u$, $w(x,u)$ is a measurable function on $\mathcal{M}.$ 
Then the type distribution $p_i$ is defined via a recurrence relation:
 \begin{equation}\label{general}p_i(dx)=(1-\beta)\frac{w(x,p_{i-1})p_{i-1}(dx)}{\int w(x,p_{i-1})p_{i-1}(dx)}+\beta q(dx), i\geq 1. \end{equation}
In particular, if $\int w(x,p_{i-1})p_{i-1}(dx)=0$ for some $i$, then 
$$p_i(dx)=(1-\beta)p_{i-1}(dx)+\beta q(dx).$$
\begin{remark}
Note that the fitness $w(x,p_{i-1})$ depends on $p_{i-1}$ only in Lenski model, not in Kingman's model.
\end{remark}
We introduce a new notion, namely the ``selective advantage'' of an individual with type value $x$ at generation $i-1$:

\begin{equation}\label{sa}s(x,p_{i-1}):=\frac{w(x,p_{i-1})}{\int w(x,p_{i-1})p_{i-1}(dx)}.\end{equation}
So (\ref{general}) can be written as 

\begin{equation}\label{selad}
p_i(dx)=(1-\beta)s(x,p_{i-1})p_{i-1}(dx)+\beta q(dx), i\geq 1.
\end{equation} 
One can see that $x$ is fitter than $y$ if and only if $s(x,p_{i-1})\geq s(y,p_{i-1})$. 

We would like to achieve that $(p_i)_{i\geq 0}$ converges to a limit distribution given any parameters $p_0, q$. To this end, it is not surprising that one needs some assumptions on $w$. The in-total 3 assumptions will be introduced and discussed in the next subsection where main results will also be announced.

\subsection{Main results}

\subsubsection{Assumption 1}

We start with some notations.  For any $p\in\P(\mathcal{M})$, the distribution function $D_p$ is defined as:$D_{p}(x)=p([0,x])$ for any $x\in M$. For any $u,v\in \P(\mathcal{M})$, 
 say $u$ is \textit{stochastically dominated} by $v$, denoted by $u\leq v$, if and only if $D_{u}(x)\geq D_{v}(x)$ for any $x\in \mathcal{M}$. 
 
Let $\Q(\mathcal{M})$ be the set of all (non-negative) finite measures on $\mathcal{M}$. Let $u\in\Q(\mathcal{M})$ and a measurable set $A\subset \mathcal{M}$, 
then $u_A$ is defined as $u_A(B):=u(B\cap A)$
for any measurable set $B\subset \mathcal{M}$. Let $u,v\in\Q(\mathcal{M})$ and a measurable set $A\subset \mathcal{M}$. Say $u$ is \textit{a component} of $v$ on $A$ (or $u_A$ is a {\it component} of $v_A$), denoted by $u_A\prec v_A$, if and only if  $v_A-u_A\in\Q(\mathcal{M})$. %Now we are able to give assumptions on $w$. 

 {\it Assumption $1$: For any $u,v\in\P(\mathcal{M})$, if $v_{[0,M)}\prec u_{[0,M)}$, then 
\begin{equation}\label{dominance}s(x,u)\geq s(x,v),\,\,\forall x\in [0,M].\end{equation}}
\noindent{\it Biological interpretation:} Note that $v$ is supported relatively on larger values  than $u$. In Kingman's model, fitter types are represented by larger numbers. In this spirit, a population with distribution $v$ is fitter than the one with $u$.  So given an individual with any type value $x$, it should have less selective advantage in $v$ than in $u$, which justifies the assumption (\ref{dominance}).

For some $p_0,q$, the limit theorem requires only the assumption 1: 

\begin{theorem}\label{c1}Under assumption 1, if $p_0=\delta_M$, then  $(p_i)_{i\geq 0}$ converges strongly to a probability measure $p^*\in\P(\mathcal{M})$ and $p^*$ depends only on $M,q,\beta$.
 Moreover $(p_i)_{[0,M)}\prec (p_{i+1})_{[0,M)}\prec(p^*)_{[0,M)}$ for any $i\geq 0$. 
\end{theorem}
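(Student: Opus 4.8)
The strategy is to follow the restricted measures $(p_i)_{[0,M)}$ rather than the $p_i$ themselves, show that they increase in the component order $\prec$ and are bounded in mass, deduce strong convergence, and then recover $p^*$ by adding back the atom at $M$ that the normalisation $p_i(\mathcal{M})=1$ forces. Since $p_0=\delta_M$, the measure $(p_0)_{[0,M)}$ is the zero measure, and restricting \eqref{selad} to $[0,M)$ gives
\begin{equation*}
(p_{i+1})_{[0,M)}(dx)=(1-\beta)\,s(x,p_i)\,(p_i)_{[0,M)}(dx)+\beta\, q_{[0,M)}(dx),\qquad i\geq 0 ,
\end{equation*}
which is the identity the whole argument rests on. For definiteness I describe the argument assuming $\int w(x,p_j)\,p_j(dx)>0$ for all $j$ (this holds in both motivating models), and comment on the degenerate case at the end.

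\emph{Monotonicity.} I claim $(p_i)_{[0,M)}\prec (p_{i+1})_{[0,M)}$ for all $i\geq 0$, by induction on $i$; the base case $i=0$ is trivial since $(p_0)_{[0,M)}=0$. For the inductive step, assume $(p_{i-1})_{[0,M)}\prec (p_i)_{[0,M)}$. Apply Assumption~1 with $u=p_i$, $v=p_{i-1}$: its hypothesis $v_{[0,M)}\prec u_{[0,M)}$ is exactly the inductive hypothesis, so $s(x,p_i)\geq s(x,p_{i-1})$ for every $x\in[0,M]$. Subtracting the two instances of the restricted recursion and writing
\begin{align*}
&s(x,p_i)(p_i)_{[0,M)}(dx)-s(x,p_{i-1})(p_{i-1})_{[0,M)}(dx)\\
&\qquad = s(x,p_i)\big((p_i)_{[0,M)}-(p_{i-1})_{[0,M)}\big)(dx)+\big(s(x,p_i)-s(x,p_{i-1})\big)(p_{i-1})_{[0,M)}(dx),
\end{align*}
both measures on the right are non-negative: the first because $s(\cdot,p_i)\geq 0$ and the inductive hypothesis makes $(p_i)_{[0,M)}-(p_{i-1})_{[0,M)}$ a (non-negative) measure, the second because $s(\cdot,p_i)-s(\cdot,p_{i-1})\geq 0$ on $[0,M)$ by Assumption~1. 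Multiplying by $(1-\beta)>0$ gives $(p_{i+1})_{[0,M)}-(p_i)_{[0,M)}\geq 0$, i.e.\ $(p_i)_{[0,M)}\prec (p_{i+1})_{[0,M)}$.

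\emph{Passage to the limit.} The measures $(p_i)_{[0,M)}$, $i\geq 0$, are non-decreasing for $\prec$ with $(p_i)_{[0,M)}(\mathcal{M})\leq 1$, so $\mu(B):=\lim_i (p_i)_{[0,M)}(B)$ defines a finite measure on $[0,M)$ and $\|\mu-(p_i)_{[0,M)}\|=\mu(\mathcal{M})-(p_i)_{[0,M)}(\mathcal{M})\to 0$, i.e.\ $(p_i)_{[0,M)}\to\mu$ strongly. Because each $p_i$ is a probability measure, $p_i(\{M\})=1-(p_i)_{[0,M)}(\mathcal{M})\to 1-\mu(\mathcal{M})=:c^*\geq 0$; setting $p^*:=\mu+c^*\delta_M\in\P(\mathcal{M})$ one has $(p^*)_{[0,M)}=\mu$ and $\|p_i-p^*\|\leq\|(p_i)_{[0,M)}-\mu\|+|p_i(\{M\})-c^*|\to 0$, which is the asserted strong convergence. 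Moreover, by transitivity $(p_{i+1})_{[0,M)}\prec(p_j)_{[0,M)}$ for every $j\geq i+1$, so letting $j\to\infty$ gives $(p^*)_{[0,M)}(B)=\lim_j(p_j)_{[0,M)}(B)\geq(p_{i+1})_{[0,M)}(B)$ for all measurable $B$, i.e.\ the full chain $(p_i)_{[0,M)}\prec (p_{i+1})_{[0,M)}\prec(p^*)_{[0,M)}$. Finally, the sequence $(p_i)_{i\geq 0}$ is built deterministically from $\delta_M$, $q$ and $\beta$ (the fitness function being fixed throughout), hence so is $p^*$, which therefore depends only on $M$, $q$, $\beta$.

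\emph{Main obstacle.} The heart of the proof is the monotonicity step, where Assumption~1 is exactly what makes the two–term splitting non-negative; the remaining measure theory is routine. The one technical wrinkle I expect to require separate care is the degenerate case $\int w(x,p_i)\,p_i(dx)=0$, in which $s(\cdot,p_i)$ is replaced by the constant $1$: one must check that the component–monotonicity survives such steps. In the Kingman and Lenski models this integral never vanishes, so the point does not arise there; in general it should be handled by a direct elementary argument alongside the induction above.
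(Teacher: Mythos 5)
Your proof is correct and takes essentially the same route as the paper: your monotonicity induction is exactly the content of the paper's Lemma~\ref{tech} (which the paper applies to the shifted pair $\hat h_i=p_i$, $h_i=p_{i+1}$, using $p_0=\delta_M$ to get the base case $(p_0)_{[0,M)}=0$), and the passage to the limit via setwise increase of the restricted measures is the same. The additional details you supply --- reconstructing the atom at $M$ from mass conservation and flagging the degenerate case $\int w(x,p_i)\,p_i(dx)=0$ --- are not spelled out in the paper but do not alter the argument.
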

This specific $p^*$ will turn out to be the unique limit distribution in the general model. 

\begin{proposition}\label{end}
Under assumption 1, for any $p_0,q$, if $p_i(\{M\})$ converges to $p^*(\{M\})$, then $(p_i)_{i\geq 0}$ converges strongly to $p^*$.   
\end{proposition}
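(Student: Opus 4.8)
The plan is to compare $(p_i)_{i\ge 0}$ with the sequence started from the Dirac mass at $M$, which Theorem \ref{c1} already controls. Write $(r_i)_{i\ge 0}$ for the sequence defined by the recurrence (\ref{general}) (equivalently (\ref{selad})) but with $r_0=\delta_M$. By Theorem \ref{c1} this sequence converges strongly to $p^*$; in particular $r_i(B)\to p^*(B)$ for every measurable $B\subset\mathcal M$, so both $(r_i)_{[0,M)}\to(p^*)_{[0,M)}$ strongly and $r_i(\{M\})\to p^*(\{M\})$. The goal is to show that, under the extra hypothesis $p_i(\{M\})\to p^*(\{M\})$, the original sequence is squeezed onto $p^*$.

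The first and main step is to prove by induction that $(r_i)_{[0,M)}\prec(p_i)_{[0,M)}$ for every $i\ge 0$. The base case is trivial since $(r_0)_{[0,M)}$ is the zero measure. For the inductive step I would write $(p_{i-1})_{[0,M)}=(r_{i-1})_{[0,M)}+\mu$ with $\mu\in\Q(\mathcal M)$, apply Assumption 1 with $v=r_{i-1}$ and $u=p_{i-1}$ to get $s(x,p_{i-1})\ge s(x,r_{i-1})\ge 0$ for all $x\in[0,M]$, and then subtract the restrictions to $[0,M)$ of (\ref{selad}) for the two sequences to obtain
\begin{equation*}
(p_i)_{[0,M)}-(r_i)_{[0,M)}=(1-\beta)\Big[\big(s(\cdot,p_{i-1})-s(\cdot,r_{i-1})\big)\,(r_{i-1})_{[0,M)}+s(\cdot,p_{i-1})\,\mu\Big],
\end{equation*}
which is visibly a non-negative measure. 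I expect this to be the main obstacle, mainly because of the bookkeeping in the degenerate case $\int w(x,p_{i-1})\,p_{i-1}(dx)=0$ (and, separately, when the same holds for $r_{i-1}$), where the simplified recurrence $p_i=(1-\beta)p_{i-1}+\beta q$ replaces (\ref{general}): there one must read off the correct substitute for $s(\cdot,p_{i-1})$ and check that the difference stays non-negative.

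With the domination in hand the rest is soft. Since $p_i$, $r_i$ and $p^*$ are probability measures on $[0,M]$, I would write $p_i([0,M))=1-p_i(\{M\})$, and similarly for $r_i$ and $p^*$; the hypothesis $p_i(\{M\})\to p^*(\{M\})$ together with the already-known $r_i(\{M\})\to p^*(\{M\})$ then force $p_i([0,M))$ and $r_i([0,M))$ both to converge to $p^*([0,M))$. Hence the non-negative measure $(p_i)_{[0,M)}-(r_i)_{[0,M)}$ has total mass tending to $0$, and for a non-negative measure the total variation equals the total mass, so it converges strongly to the zero measure. Combining this with $(r_i)_{[0,M)}\to(p^*)_{[0,M)}$ via the triangle inequality for total variation gives $(p_i)_{[0,M)}\to(p^*)_{[0,M)}$ strongly; adding back the atom at $M$ — the total variation of $p_i-p^*$ is at most that of $(p_i)_{[0,M)}-(p^*)_{[0,M)}$ plus $|p_i(\{M\})-p^*(\{M\})|$, and both terms vanish — yields the desired strong convergence $p_i\to p^*$.
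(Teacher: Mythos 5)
Your proposal is correct and follows essentially the same route as the paper: it compares $(p_i)$ with the sequence started from $\delta_M$, establishes $(\hat p_i)_{[0,M)}\prec (p_i)_{[0,M)}$ by the inductive argument that the paper packages as Lemma \ref{tech}, and then squeezes via the total masses on $[0,M)$ exactly as in the paper's (terser) final step. The only difference is that you re-derive the domination inline rather than invoking the lemma, and you explicitly flag the degenerate case $\int w(x,p_{i-1})p_{i-1}(dx)=0$, which the paper also leaves implicit.
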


\begin{corollary}\label{nomass}
Under assumption 1, for any $p_0,q$, if $p^*(\{M\})=0$, then $(p_i)_{i\geq 0}$ converges strongly to $p^*$.   
\end{corollary}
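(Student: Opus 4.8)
The plan is to deduce Corollary~\ref{nomass} directly from Proposition~\ref{end}. Since Proposition~\ref{end} already reduces strong convergence of $(p_i)_{i\geq 0}$ to the single scalar condition $p_i(\{M\})\to p^*(\{M\})$, it suffices to verify this convergence under the hypothesis $p^*(\{M\})=0$; that is, I must show $p_i(\{M\})\to 0$ for arbitrary $p_0,q$. So the whole content is: controlling the atom at the top type $M$.

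First I would extract an explicit recursion for the atom. Evaluating \eqref{selad} at the singleton $\{M\}$ gives
\begin{equation}\label{atomrec}
p_i(\{M\}) = (1-\beta)\, s(M,p_{i-1})\, p_{i-1}(\{M\}) + \beta q(\{M\}),\qquad i\geq 1,
\end{equation}
(using the convention for the degenerate case when the fitness integral vanishes, where $s(M,p_{i-1})$ is replaced by $1$). Next I would compare $(p_i)_{i\geq 0}$ with the sequence $(\tilde p_i)_{i\geq 0}$ started from $\tilde p_0=\delta_M$. The key monotonicity input is Assumption~1 together with the statement of Theorem~\ref{c1}: the $\delta_M$-started sequence is the ``fittest'' one, so one expects $p_i$ to be stochastically dominated by $\tilde p_i$, or at least that the atoms obey $p_i(\{M\})\le \tilde p_i(\{M\})$ after a suitable comparison, once we also know $\tilde p_i(\{M\})\downarrow p^*(\{M\})=0$. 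More precisely, the argument should run: because $p^*(\{M\})=0$ and $\tilde p_i(\{M\})\to p^*(\{M\})$ (from Theorem~\ref{c1}), for every $\epsilon>0$ there is $N$ with $\tilde p_N(\{M\})<\epsilon$; then I would show that from generation $N$ on, $p_{N+k}(\{M\})$ is bounded by a quantity that also tends to $0$, by iterating \eqref{atomrec} and bounding the selective advantage factor $s(M,p_{i-1})$ using Assumption~1 applied to the pair $(p_{i-1},\tilde p_{i-1})$ (noting the component relation between the $[0,M)$-restrictions that Theorem~\ref{c1} guarantees for $\tilde p_i$, combined with the fact that any $p_{i-1}$ with small top atom is ``less fit'' than a suitable reference measure).

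The main obstacle I anticipate is making the comparison between the two sequences rigorous when they do not satisfy a clean component relation on $[0,M)$ for all $i$: Assumption~1 requires $v_{[0,M)}\prec u_{[0,M)}$ to conclude $s(\cdot,u)\ge s(\cdot,v)$, and it is not automatic that $(\tilde p_i)_{[0,M)}\succ (p_i)_{[0,M)}$ for the arbitrary-$p_0$ sequence. One route around this is to bound $s(M,p_{i-1})$ not via $\tilde p_{i-1}$ but intrinsically: show that $s(M,u)\le C$ uniformly, or better, that $(1-\beta)s(M,u)<1$ whenever $u$ has enough mass below $M$, so that \eqref{atomrec} is a contraction-type recursion with limit $\beta q(\{M\})/(1-(1-\beta)s(M,p^*))$, and then identify this limit with $p^*(\{M\})=0$, forcing $q(\{M\})=0$ and hence $p_i(\{M\})\to 0$ directly. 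I would first check whether the hypothesis $p^*(\{M\})=0$ in fact already entails $q(\{M\})=0$ (plausibly it does, via the fixed-point equation for $p^*$), which would make \eqref{atomrec} read $p_i(\{M\})=(1-\beta)s(M,p_{i-1})p_{i-1}(\{M\})$; combined with a uniform bound $s(M,p_{i-1})\le s(M,\tilde p_{i-1})$ obtainable from the $\delta_M$-comparison and the fact that $(1-\beta)s(M,\tilde p_{i-1})\to (1-\beta)s(M,p^*)$, summability/contraction would then give $p_i(\{M\})\to 0$. Finally, once $p_i(\{M\})\to 0=p^*(\{M\})$ is established, I invoke Proposition~\ref{end} to upgrade to strong convergence, completing the proof.
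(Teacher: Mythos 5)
Your overall strategy (reduce to $p_i(\{M\})\to 0$ and invoke Proposition~\ref{end}) is the paper's strategy, but the step you flag as the ``main obstacle'' is exactly where your argument has a genuine gap, and the workaround you sketch goes in the wrong direction. The comparison with the $\delta_M$-started sequence $\hat p_i$ is in fact automatic and is the whole proof: since $(\delta_M)_{[0,M)}$ is the null measure, it is trivially a component of $(p_0)_{[0,M)}$ for \emph{any} $p_0$, so Lemma~\ref{tech} applies with $\hat h_0=\delta_M$, $h_0=p_0$ and yields $(\hat p_i)_{[0,M)}\prec (p_i)_{[0,M)}$, hence $0\le p_i(\{M\})\le \hat p_i(\{M\})$ for every $i$ --- this is precisely \eqref{ppi}. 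Theorem~\ref{c1} gives $\hat p_i(\{M\})\to p^*(\{M\})=0$, the squeeze gives $p_i(\{M\})\to 0$, and Proposition~\ref{end} finishes. No recursion for the atom, no contraction estimate, and no discussion of whether $q(\{M\})=0$ is needed.

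The repair you propose instead --- bounding the multiplier in your recursion by $s(M,p_{i-1})\le s(M,\hat p_{i-1})$ ``from the $\delta_M$-comparison'' --- is false as stated: Assumption~1 applied to the true component relation $(\hat p_{i-1})_{[0,M)}\prec (p_{i-1})_{[0,M)}$ gives $s(x,p_{i-1})\ge s(x,\hat p_{i-1})$, i.e.\ the \emph{reverse} inequality (the fitter, $\delta_M$-started population has the \emph{smaller} selective advantage at each point; this is exactly how the paper uses it in the proof of Theorem~\ref{c11}). So iterating your recursion \eqref{atomrec} with that bound would not close. The inequality you actually need, $R_{i,1}(M)\le T_{i,1}(M)/p_0(\{M\})$ in the paper's notation, is itself \emph{derived from} the atom comparison $p_i(\{M\})\le\hat p_i(\{M\})$, not a route to proving it. In short: the comparison you doubted is free, and the bound you proposed in its place is backwards.
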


\begin{theorem}\label{c11}Under assumption 1, if $q(\{M\})>0$, then  $(p_i)_{i\geq 0}$ converges strongly to $p^*$. % as in Theorem \ref{c1}.
\end{theorem}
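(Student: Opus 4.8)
The plan is to reduce, via Proposition~\ref{end}, to the scalar statement $p_i(\{M\})\to p^*(\{M\})$, and to obtain it by comparing $(p_i)_{i\ge0}$ with the sequence $(p_i^+)_{i\ge0}$ defined by (\ref{general}) from $p_0^+=\delta_M$, for which Theorem~\ref{c1} supplies everything. Write $a_i:=p_i(\{M\})$, $a_i^+:=p_i^+(\{M\})$, $a^*:=p^*(\{M\})$; by Theorem~\ref{c1}, $p_i^+\to p^*$ strongly and $a_i^+\downarrow a^*$. I would first record that $a^*\ge\beta q(\{M\})>0$: for $i\ge1$ the mutation term in (\ref{general}) gives $p_i^+\ge\beta q$ as measures, and strong convergence, being convergence in total variation, forces $p_i^+(A)\to p^*(A)$ for every Borel set $A$, so $p^*\ge\beta q$.

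The core is a two-sided comparison, proved by induction on $i$: under Assumption~1,
\[
p_i\le p_i^+ \qquad\text{and}\qquad (p_i^+)_{[0,M)}\prec(p_i)_{[0,M)}.
\]
Both are trivial at $i=0$, since $\delta_M$ stochastically dominates every measure and $(\delta_M)_{[0,M)}=0$. For the inductive step, the component relation $(p_i^+)_{[0,M)}\prec(p_i)_{[0,M)}$ together with Assumption~1 gives $s(x,p_i)\ge s(x,p_i^+)$ for all $x\in[0,M]$; combining this with the same component relation (so $p_i$ assigns at least the mass of $p_i^+$ to every Borel $A\subseteq[0,M)$) and with $w\ge0$ yields $\int_A s(y,p_i)p_i(dy)\ge\int_A s(y,p_i^+)p_i^+(dy)$ for all such $A$. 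Since $\beta q$ cancels in $p_{i+1}-p_{i+1}^+$, taking general $A\subseteq[0,M)$ gives $(p_{i+1}^+)_{[0,M)}\prec(p_{i+1})_{[0,M)}$, and taking $A=[0,x]$ with $x<M$ (and $D_{p_{i+1}}(M)=D_{p_{i+1}^+}(M)=1$) gives $p_{i+1}\le p_{i+1}^+$; in particular $a_i\le a_i^+$. A comparison of this kind already underlies Corollary~\ref{nomass}.

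Now set $d_i:=a_i^+-a_i\ge0$. By the comparison, $p_i-p_i^+$ is a nonnegative measure on $[0,M)$ of total mass $d_i$ minus $d_i\,\delta_M$, so the total variation of $p_i-p_i^+$ equals $2d_i$; running the same bookkeeping one step of (\ref{general}) further gives the scalar recursion
\[
d_{i+1}=(1-\beta)\Bigl[s(M,p_i^+)\,d_i-a_i\bigl(s(M,p_i)-s(M,p_i^+)\bigr)\Bigr]\ \le\ (1-\beta)\,s(M,p_i^+)\,d_i,
\]
the inequality because $s(M,p_i)\ge s(M,p_i^+)$ and $a_i\ge0$. The contraction factor is governed purely by the $\delta_M$-sequence: from $a_{i+1}^+=(1-\beta)s(M,p_i^+)a_i^++\beta q(\{M\})$ one reads $(1-\beta)s(M,p_i^+)=\bigl(a_{i+1}^+-\beta q(\{M\})\bigr)/a_i^+\to 1-\beta q(\{M\})/a^*<1$, using $a_i^+\downarrow a^*$ and $a^*>0$. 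Hence $(1-\beta)s(M,p_i^+)\le\theta<1$ for all large $i$, so $d_i\to0$, i.e.\ $a_i\to a^*=p^*(\{M\})$; Proposition~\ref{end} then finishes (equivalently, $d_i\to0$ with $p_i^+\to p^*$ gives $p_i\to p^*$ strongly).

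I expect the main obstacle to be the inductive comparison: arranging matters so that Assumption~1 is applicable forces one to propagate the component relation $(p_i^+)_{[0,M)}\prec(p_i)_{[0,M)}$ in lockstep with the stochastic order, and one must watch the degenerate branch $\int w(y,p_{i-1})p_{i-1}(dy)=0$ of (\ref{general}) — which, however, does not arise when $q(\{M\})>0$ and $w(M,\cdot)$ is positive, since then $\int w(y,p_{i-1})p_{i-1}(dy)\ge\beta q(\{M\})\,w(M,p_{i-1})>0$. Everything after that is the elementary estimate above, whose only real point is that the rate $(1-\beta)s(M,p_i^+)$ stays bounded away from $1$ precisely because $q$ charges the maximal type $M$.
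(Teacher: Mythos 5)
Your proof is correct, and while it shares the paper's skeleton --- compare $(p_i)$ with the chain started from $\delta_M$ (your $p_i^+$, the paper's $\hat p_i$) via Lemma~\ref{tech}, reduce everything to the scalar statement $p_i(\{M\})\to p^*(\{M\})$, and finish with Proposition~\ref{end} --- the mechanism by which you close that scalar statement is genuinely different. The paper unrolls the recursion for the atom at $M$ into the products and sums $R_{i,1},R_{i,2},T_{i,1},T_{i,2}$ of (\ref{r})--(\ref{t}), proves $T_{i,1}(M)\to 0$ by contradiction (if not, $s(M,\hat p_i)$ would increase to $1/(1-\beta)$, forcing $T_{i,2}(M)\to\infty$ and hence $\hat p_i(\{M\})\to\infty$ precisely because $q(\{M\})>0$), and then sandwiches via $R_{i,1}(M)\leq T_{i,1}(M)/p_0(\{M\})$ --- which is why it must first reduce to the case $p_0(\{M\})>0$ by passing from $p_0$ to $p_1$. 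You instead run a one-step contraction on the difference $d_i=\hat p_i(\{M\})-p_i(\{M\})$ and identify the asymptotic contraction factor explicitly as $1-\beta q(\{M\})/p^*(\{M\})<1$, using the lower bound $p^*\geq\beta q$ to guarantee $p^*(\{M\})>0$. Your route is more elementary and more quantitative (it yields a geometric rate of convergence of $p_i(\{M\})$), avoids both the product bookkeeping and the proof by contradiction, and needs no reduction to $p_0(\{M\})>0$; what the paper's bookkeeping buys in exchange is that the same quantities $R_{i,1}(M)$ and $T_{i,1}(M)$ are reused verbatim in the proof of Theorem~\ref{c2}. The one imprecision in your write-up --- the degenerate branch $\int w(x,p_{i-1})p_{i-1}(dx)=0$, which you rule out only under the extra hypothesis that $w(M,\cdot)$ is positive --- is shared with the paper itself (Assumption~1 is phrased in terms of $s$, which presupposes a positive denominator), so it is not a gap relative to the paper's own standard of rigour.
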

The following corollary gives a practical criterion for strong convergence.   
\begin{corollary}\label{forking}
Under assumption 1, for any $p_0,q$, if there exists a measurable function $f(x)$ on $\mathcal{M}$ such that $p^*(dx)=f(x)q(dx)$. Then $(p_i)_{i\geq 0}$ converges strongly to $p^*$.
\end{corollary}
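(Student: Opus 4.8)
The plan is to reduce Corollary~\ref{forking} to the two convergence criteria already in hand, Theorem~\ref{c11} and Corollary~\ref{nomass}, via a dichotomy on the single quantity $q(\{M\})$. First I would recall that by Theorem~\ref{c1} the limit candidate $p^*$ is a fixed measure depending only on $M,q,\beta$; hence the hypothesis ``$p^*(dx)=f(x)q(dx)$ for some measurable $f$'' is simply the statement that this particular $p^*$ is absolutely continuous with respect to $q$, and in particular assigns zero mass to every $q$-null set.

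If $q(\{M\})>0$, then Theorem~\ref{c11} applies directly and gives that $(p_i)_{i\geq0}$ converges strongly to $p^*$; the function $f$ is not needed in this case. If instead $q(\{M\})=0$, then $\{M\}$ is a $q$-null set, so the assumed representation forces
\[
p^*(\{M\})=\int_{\{M\}}f(x)\,q(dx)=0 ,
\]
and therefore Corollary~\ref{nomass} applies, again yielding strong convergence of $(p_i)_{i\geq0}$ to $p^*$. Since the two cases are exhaustive, this completes the argument.

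I do not expect a genuine obstacle here: the only points to check are that the case split is exhaustive (trivial) and that absolute continuity of $p^*$ with respect to an $M$-atomless $q$ transfers atomlessness to $p^*$ at the point $M$ (immediate, since the integral of a measurable function over a $q$-null set vanishes). Conceptually, the corollary records that the representability of $p^*$ as a density times $q$ is exactly the bookkeeping that rules out mass escaping to the top type $M$ in the regime $q(\{M\})=0$ not already covered by Theorem~\ref{c11}; the substantive work lies upstream, in Proposition~\ref{end} and Corollary~\ref{nomass}, rather than in the present deduction.
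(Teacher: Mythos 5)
Your proof is correct and is essentially the paper's own argument: the paper splits on whether $p^*(\{M\})=0$ or $p^*(\{M\})>0$ and uses $p^*(dx)=f(x)q(dx)$ to pass to $q(\{M\})>0$ in the second case, which is just the contrapositive of your split on $q(\{M\})$. Both versions invoke exactly Corollary~\ref{nomass} and Theorem~\ref{c11}, so there is nothing to add.
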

The proof is straightforward: if $p^*(\{M\})=0$, then Corollary \ref{nomass} applies. If $p^*(\{M\})>0$ then $q(\{M\})>0$. In this case Theorem \ref{c11} applies. 

\subsubsection{Assumption 2}
%The argument is simple: One needs only consider the conditions 2 and 3, since Theorem 1 already gives the strong convergence under condition 1.  Theorem \ref{c2} and Theorem \ref{c3} guarantee respectively for the two cases that $(p_i)_{[0,a]}$ converges strongly to $(p^*)_{[0,a]}$ for any $0\leq a<M$. Together with the fact that $p^*(\{M\})=0$ which entails that $p^*((a,M])\to 0$ as $a$ tends to $M$ from left, the strong convergence of $p_i$ to $p^*$ then follows. 

{\it Assumption $2$: For any $u,v\in\P(\mathcal{M})$ and $0<\epsilon<1$, if $v_{[0,M)}\prec u_{[0,M)}$ and there exists $0\leq a<M$ such that $D_u(a)\geq D_v(a)+\epsilon$, then there will be a number $c(a,\epsilon)>1$ such that  
\begin{equation}\label{ratio}
s(M,u)\geq s(M,v)c(a,\epsilon).
\end{equation}}
\noindent{\it Biological interpretation:} Distribution $u$ puts more weight on $[0,M)$ and especially $\varepsilon$ more on $[0,a]$. So a population with type distribution $u$ is substantially less fitter than the one with $v$. This substantiality is explained by $c(a,\epsilon)>1$ at point $M$.

Based on assumptions 1 and 2, one can prove limit theorem for $0<p_0(\{M\})<1, q(\{M\})=0$. 

%Given any $p_0, q, $ we would like to show that $(p_i)_{i\geq 0}$ converges strongly or weakly to a certain probability measure $p^*\in\P(\mathcal{M})$ as $i$ goes to infinity.  We shall divide the initial conditions into three parts. 

%\begin{itemize}
%\item Condition 1: $p_0$ is a Dirac measure on $\{M\}$.
%\item Condition 2:  Let condition 1 be not satisfied and 
%either $(1):$ 
%$$M>m_q\text{ and }p_0(\{M\})>0,$$ 
%or $(2):$ 
%$$M=m_q\text{ and }p_0(\{M\})+q(\{M\})>0.$$ 

%\item  Condition 3: 
%Either $(1):$ 
%$$M>m_q\text{ and }p_0(\{M\})=0,$$ 
%or $(2):$ 
%$$M=m_q\text{ and }q(\{M\})=p_0(\{M\})=0.$$
%\end{itemize}

%One can see that the above conditions are not overlapping and any $p_0,q$ must satisfy only one of the three conditions. 

\begin{theorem}\label{c2}
Under assumptions 1, 2, if $0<p_0(\{M\})<1,  q(\{M\})=0$, then for any $0\leq a< M$, $(p_i)_{[0,a]}$ converges strongly to $(p^*)_{[0,a]}$. As a consequence, $(p_i)_{i\geq 0}$ converges weakly to $p^*$. 
\end{theorem}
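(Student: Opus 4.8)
\textbf{Proof proposal for Theorem \ref{c2}.}

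The plan is to realize $p_i$ as lying between two comparison sequences that we can control. Concretely, I would start two auxiliary recursions of the form (\ref{selad}): one, call it $(p_i^-)_{i\geq 0}$, started from $\delta_M$, and one, $(p_i^+)_{i\geq 0}$, started from the measure obtained by moving all the mass of $p_0$ that sits below $M$ down to $0$ — i.e.\ $p_0^+ = p_0(\{M\})\delta_M + (1-p_0(\{M\}))\delta_0$ — so that $p_0^- = \delta_M \geq p_0 \geq p_0^+$ and, on $[0,M)$, $(p_0^+)_{[0,M)} \prec (p_0)_{[0,M)} \prec (p_0^-)_{[0,M)} = 0$. By Theorem \ref{c1}, $(p_i^-)_{i\geq 0}$ converges strongly to $p^*$. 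The first key step is a \emph{monotonicity/comparison lemma}: using Assumption 1 and the structure of (\ref{selad}), if $v_{[0,M)} \prec u_{[0,M)}$ then after one step the same component ordering is preserved between the updates of $u$ and $v$ (the $\beta q$ term is common, and $(1-\beta)s(x,\cdot)u$ dominates $(1-\beta)s(x,\cdot)v$ as a component because $s(x,u)\geq s(x,v)$ and $u_{[0,M)}\prec v_{[0,M)}$... here one has to be careful about the direction, so let me phrase it as: larger-in-$\prec$-on-$[0,M)$ is propagated). Iterating, $(p_i^+)_{[0,M)} \prec (p_i)_{[0,M)} \prec (p_i^-)_{[0,M)}$ for all $i$, hence also $p_i([0,a]) \geq p_i^-([0,a])$ for every $a<M$, and by monotonicity (Theorem \ref{c1} again) $p_i^-([0,a]) \uparrow p^*([0,a])$.

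The second key step — and the main obstacle — is the matching upper bound: showing $\limsup_i p_i([0,a]) \leq p^*([0,a])$ for each fixed $a<M$, equivalently that the mass $p_i(\{M\})$ does not stay bounded away from $p^*(\{M\})$ in a way that traps mass on $[0,a]$. This is exactly where Assumption 2 enters. The idea is: suppose for contradiction that along a subsequence $p_i([0,a]) \geq p^*([0,a]) + 2\epsilon$ for some $a<M$ and $\epsilon>0$, infinitely often. Comparing $p_i$ with $p_i^-$ (which has $p_i^-([0,a]) \to p^*([0,a])$), we get $D_{p_i}(a) \geq D_{p_i^-}(a) + \epsilon$ eventually on that subsequence, while $(p_i^-)_{[0,M)} \prec (p_i)_{[0,M)}$. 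Assumption 2 then gives $s(M, p_i) \geq s(M, p_i^-)\,c(a,\epsilon)$ with $c(a,\epsilon)>1$ fixed. Feeding this into the recursion for the $\{M\}$-mass, $p_{i+1}(\{M\}) = (1-\beta)s(M,p_i)p_i(\{M\})$ (using $q(\{M\})=0$), and the analogous identity for $p^-$, one obtains that the ratio $p_{i+1}(\{M\})/p_{i+1}^-(\{M\})$ is multiplied by at least $c(a,\epsilon)>1$ at each such step — but this ratio is bounded above (both quantities lie in $[0,1]$ and $p_i^-(\{M\})$ stays bounded below, since $p_i^-$ converges to $p^*$ and... one needs $p^*(\{M\})>0$ here, which holds because $q(\{M\})=0$ forces the limit to retain atom at $M$; this point needs checking). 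The contradiction forces only finitely many such bad indices, for every $\epsilon>0$, giving $\limsup_i p_i([0,a]) \leq p^*([0,a])$.

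Combining the two bounds yields $p_i([0,a]) \to p^*([0,a])$ for every $a < M$ at which $p^*$ is continuous; since $[0,a]$ ranges over a generating class and monotone limits are involved, one upgrades this to $(p_i)_{[0,a]} \to (p^*)_{[0,a]}$ strongly (the restriction to $[0,a]$ with $a<M$ kills the atom at $M$, so total-variation convergence on $[0,a]$ follows from the comparison squeeze, not just weak convergence — indeed $p_i^+|_{[0,a]} \prec p_i|_{[0,a]} \prec p_i^-|_{[0,a]}$ and both ends converge strongly to $(p^*)_{[0,a]}$, the lower one by Theorem \ref{c1} and the upper one by the same theorem applied to $p^+$ after identifying its limit). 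Finally, strong convergence of $(p_i)_{[0,a]}$ for all $a<M$ plus tightness gives weak convergence of $p_i$ to $p^*$ on all of $\mathcal{M}$: any weak limit point agrees with $p^*$ on $[0,M)$ and has total mass $1$, hence agrees with $p^*$ including the atom at $M$. I expect the delicate points to be (i) getting the direction of the $\prec$-comparison right through the normalization in $s(x,\cdot)$, and (ii) justifying that $p_i^-(\{M\})$ — equivalently $p^*(\{M\})$ — is bounded away from $0$, which is what makes the Assumption-2 amplification argument produce a genuine contradiction rather than a vacuous one.
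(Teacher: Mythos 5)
Your core mechanism is the same as the paper's: compare $p_i$ with the chain $\hat p_i$ started from $\delta_M$ (your $p_i^-$), note that with $q(\{M\})=0$ the atom at $M$ evolves multiplicatively as $p_{i+1}(\{M\})=(1-\beta)s(M,p_i)p_i(\{M\})$, and use Assumption~2 to show that a persistent gap $D_{p_i}(a)\geq D_{p_i^-}(a)+\epsilon$ would inflate the ratio $p_i(\{M\})/p_i^-(\{M\})$ by a factor $c(a,\epsilon)>1$ infinitely often, contradicting the bound $p_i(\{M\})\leq p_i^-(\{M\})$ together with the positive starting ratio $p_0(\{M\})>0$. This is exactly the paper's argument phrased via the products $R_{i,1}(M)$ and $T_{i,1}(M)$, and the passage from $p_i([0,a])-p_i^-([0,a])\to 0$ to total-variation convergence on $[0,a]$ is also the paper's (the difference is a nonnegative measure, so its total variation is its mass).

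Two genuine problems remain. First, your justification that $p^*(\{M\})>0$ --- ``$q(\{M\})=0$ forces the limit to retain the atom at $M$'' --- is false; in Kingman's Case~1 one has $q(\{M\})=0$ and $p^*(\{M\})=0$ (no condensation). The correct fix is the paper's case split: if $p^*(\{M\})=0$, Corollary~\ref{nomass} already gives strong convergence and Assumption~2 is never needed; the amplification argument is run only when $p^*(\{M\})>0$, which guarantees $s(M,\hat p_k)>0$ for all $k$ so the ratio is well defined. Second, the auxiliary chain $p_i^+$ started from $p_0(\{M\})\delta_M+(1-p_0(\{M\}))\delta_0$ is both unnecessary and circular as you use it: Theorem~\ref{c1} applies only to the initial condition $\delta_M$, so ``identifying the limit of $p^+$'' as $p^*$ on $[0,a]$ is precisely an instance of the statement being proved. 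Drop $p^+$ entirely; the one-sided comparison with $p^-$ plus convergence of the masses $p_i([0,a])$ suffices. Finally, your displayed chain $(p_0^+)_{[0,M)}\prec(p_0)_{[0,M)}\prec(p_0^-)_{[0,M)}=0$ has the component order reversed (the correct relation is $0=(p_0^-)_{[0,M)}\prec(p_0)_{[0,M)}$, propagated by Lemma~\ref{tech}); you flag this yourself and the inequalities you actually use, such as $p_i([0,a])\geq p_i^-([0,a])$, are the correct ones, but the writeup should be fixed to match.
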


\subsubsection{Assumption 3}
Now one can proceed to assumption 3 which will solve the problem for the last situation: $p_0(\{M\})=q(\{M\})=0.$ A few notations are needed.  For any $h\in\P(\mathcal{M})$, let $h^a=h_{[0,a)}+h([a,m_h])\delta_a$ for any $0\leq a\leq m_h$ and $h^a=h$ for $a>m_h$. Given $q\in\P(\mathcal{M})$ and $0\leq a\leq M$, let $p_0=\delta_{a}$ be the initial type distribution and $q^a$ the mutation type distribution, with $p_i$ the type distribution of generation $i$.   Note that  $m_{q^a}\leq m_{p_0}=a. $ By Theorem \ref{c1}, $(p_i)_{i\geq 0}$ converges strongly to a limit probability measure, denoted by $p^{a,*}$. Remark that $p^{M,*}=p^*$. The assumption $3$ is the following: 

{\it Assumption 3: In the above framework,  $p^{a,*}$ converges weakly to $p^*$ as $a$ tends to $M$ from left.}

\noindent{\it Biological interpretation:} This assumption says that the population is healthy and stable, in the sense that, by a slight type value decrease among the fittest individuals both in $p_0$ and/or $q$, the limit type distribution should not be changed too much. This assumption shows the stability of population resistant to type value perturbance. 

\begin{theorem}\label{c3}
Under assumptions 1,2 and 3, if $p_0(\{M\})=q(\{M\})=0,$ then the same conclusion as in Theorem \ref{c2} follows. \end{theorem}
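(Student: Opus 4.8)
\textbf{Proof proposal for Theorem \ref{c3}.}

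The plan is to realize the case $p_0(\{M\})=q(\{M\})=0$ as a limit of the case already handled by Theorem \ref{c2}, using the approximations $q^a$ and $\delta_a$ from the statement of Assumption 3 together with a monotonicity (coupling) argument in the parameter $a$. First I would set up, for each $a<M$ close enough to $M$, an auxiliary sequence $(p_i^{(a)})_{i\ge 0}$ driven by the recursion \eqref{general} with initial datum $\delta_a$ and mutant law $q^a$; by Theorem \ref{c1} this converges strongly to $p^{a,*}$, and by Assumption 3, $p^{a,*}\Rightarrow p^*$ as $a\uparrow M$. Separately, I would run a second auxiliary sequence $(\tilde p_i^{(a)})$ with the \emph{true} initial datum $p_0$ but still with mutant law $q^a$ (or with $\delta_a$ replaced by $p_0$ only if $p_0\le\delta_a$ fails — here one needs $p_0$ to be dominated appropriately). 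The heart of the argument is then to show that on every interval $[0,a']$ with $a'<M$ the true sequence $(p_i)_{[0,a']}$ is squeezed between controllable quantities built from these auxiliary sequences.

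The key steps, in order: (1) Show a \emph{monotonicity in the mutant law and initial condition}: if $q'\le q$ (stochastically, restricted to $[0,M)$ in the component order) and the initial data are correspondingly ordered, then the associated sequences stay ordered for all $i$, via Assumption 1 — this is the same comparison principle that underlies Theorem \ref{c1} and Proposition \ref{end}, and I expect it to be already available from the Section 4 machinery. (2) Use it to sandwich: since $q^a\ge q$ on $[0,M)$ is false in general but $q^a$ agrees with $q$ on $[0,a)$ and only adds mass at $a$, one gets that $(p_i)_{[0,a)}$ is a component of $(p_i^{(a)})_{[0,a)}$ (run with $\delta_a$ and $q^a$), so $\liminf$ and $\limsup$ of $p_i([0,a'])$ are trapped between $p^{a,*}([0,a'])$-type bounds for all $a'<a$. (3) Pass to the limit $a\uparrow M$: by Assumption 3 the trapping bounds converge to $p^*([0,a'])$ at all continuity points, forcing $p_i([0,a'])\to p^*([0,a'])$, hence $(p_i)_{[0,a']}\to (p^*)_{[0,a']}$ strongly for every $a'<M$; summing a geometric-type tail bound (coming from the $\beta q$ term, exactly as in the proof of Theorem \ref{c2}) upgrades this to the stated strong convergence on $[0,a]$. (4) Conclude weak convergence of $(p_i)$ to $p^*$ on all of $\mathcal{M}$ since the only possible escaping mass is at $\{M\}$ and total mass is conserved.

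The main obstacle I anticipate is step (2): making the comparison between the genuine dynamics (driven by $p_0,q$) and the approximating dynamics (driven by $\delta_a,q^a$) rigorous, because the fitness function $w(\cdot,p_{i-1})$ depends on the \emph{whole} current distribution, so a perturbation of the dynamics near $M$ feeds back — through the normalizing integral $\int w(x,p_{i-1})p_{i-1}(dx)$ and hence through $s(\cdot,p_{i-1})$ — into the selective advantages at \emph{every} type value. Controlling this feedback is precisely what Assumptions 1 and 2 are designed for: Assumption 1 gives the one-sided monotonicity needed to keep the sandwich from collapsing, and Assumption 2 (with its quantitative factor $c(a,\epsilon)>1$) is what prevents mass from lingering near $M$ in the approximating chains, so that the bounds in step (3) genuinely converge rather than leaving a gap. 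I would expect the proof to reuse, almost verbatim, the estimates from the proof of Theorem \ref{c2}, the new input being only the $a\uparrow M$ continuity supplied by Assumption 3.
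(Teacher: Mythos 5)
Your overall architecture is the paper's: sandwich the true sequence between the $\delta_M$-started chain $(\hat p_i)$, which converges strongly to $p^*$ and bounds $(p_i)$ from below in the component order, and a truncated auxiliary chain converging to $p^{a,*}$ from above, then send $a\uparrow M$ via Assumption 3. But your step (2), as you commit to it, is broken. You take the upper comparison chain to be the one started from $\delta_a$ with mutant law $q^a$ and claim $(p_i)_{[0,a)}\prec(p_i^{(a)})_{[0,a)}$. The comparison principle here (Lemma \ref{tech1}) is an induction in the \emph{component} order, so it needs the initial data ordered on $[0,a)$ in that sense, i.e. $(p_0)_{[0,a)}\prec(\delta_a)_{[0,a)}=0$; this forces $p_0([0,a))=0$ and is false in general, so the induction never starts. (You flag the issue --- ``one needs $p_0$ to be dominated appropriately'' --- but then proceed with $\delta_a$ anyway.) The fix, which is what the paper does, is to start the upper chain from the truncation $h_0=(p_0)^b$ (all mass of $p_0$ on $[b,M]$ pushed down to an atom at $b$), with mutant law $q^b$, for $a<b<M$: this agrees with $p_0$ on $[0,b)$, so the component relation holds trivially at $i=0$ and propagates by Assumption 1. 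Since $h_0(\{b\})>0$, Theorems \ref{c11} and \ref{c2} (not Theorem \ref{c1}, whose hypothesis is the initial condition $\delta_b$) identify the limit of $(h_i)_{[0,a]}$ as $(p^{b,*})_{[0,a]}$. The $\delta_a$-started chain only serves to \emph{define} $p^{a,*}$; it cannot serve as the comparison chain.

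A second, smaller issue is in your step (3). Weak convergence $p^{b,*}\Rightarrow p^*$ gives, by Portmanteau, only $\limsup_{b\to M}p^{b,*}([0,a])\le p^*([0,a])$, not convergence ``at all continuity points'' of freely chosen intervals. The paper closes this by extracting the reverse inequality $(p^*)_{[0,a]}\prec(p^{b,*})_{[0,a]}$ from the $i\to\infty$ limit of the sandwich itself, whence $p^{b,*}([0,a])-p^*([0,a])\to 0$ and, since the measures are ordered as components, $(p^{b,*})_{[0,a]}\to(p^*)_{[0,a]}$ in total variation; the same two-sided squeeze then gives strong convergence of $(p_i)_{[0,a]}$. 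No ``geometric-type tail bound from the $\beta q$ term'' is needed (nor is that how the proof of Theorem \ref{c2} concludes): once the total masses on $[0,a]$ converge, the component order upgrades setwise control to total-variation convergence for free.
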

Therefore, under assumptions 1,2,3, whatever $p_0,q$ are, $(p_i)_{i\geq 0}$ converges at least weakly to a unique limit distribution $p^*$ which depends only on $q, M, \beta$.  In particular, if $p_i(\{M\})=0$ for any $i\geq 0$ but $p^*(\{M\})>0$,
then we say the {\it condensation }phenomenon emerges (see \cite{M13} for a study of the phenomenon in Kingman's model and related references therein to other models, e.g., on Bose-Einstein condensation).

In section 5, we will prove that Kingman's model and Lenski model satisfy the three assumptions and therefore limit 
theorems are obtained, especially Theorem \ref{King} for Kingman's model (using Corollary \ref{forking} when it comes to strong convergence). 
 
 \section{Proofs}

\subsection{Proofs of Theorems \ref{c1}, \ref{c11}, Proposition \ref{end} and Corollary \ref{nomass}}

We shall first begin with a technical lemma, which will be used frequently in the sequel.

\begin{lemma}\label{tech}
Let only the assumption 1 be true.  Let $\hat h_0$ and $h$ be two initial type distributions.   The type distribution of generation $i$ is denoted respectively by ${\hat{h}_i}, {h_i}$. The mutant type distribution is $q$ in both cases. If $(\hat{h}_0)_{[0,M)}\prec({{h}_0})_{[0,M)}$$($and hence $\hat h_0(\{M\})\geq h_0(\{M\})$$)$, then for any $i\geq 1$,  $(\hat{h}_i)_{[0,M)}\prec({{h}_i})_{[0,M)}$ $($and $\hat h_i(\{M\})\geq h_i(\{M\})$$)$.
\end{lemma}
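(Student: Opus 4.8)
\emph{Proof proposal.} The plan is to induct on $i$: the base case is the hypothesis itself, and the inductive step will reduce—after the mutation terms cancel—to a one-line application of Assumption~1 together with monotonicity of the integral. So I would fix $i\ge 1$ and \emph{assume as inductive hypothesis} that $(\hat h_{i-1})_{[0,M)}\prec(h_{i-1})_{[0,M)}$ (true at $i-1=0$ by assumption), aiming to deduce $(\hat h_i)_{[0,M)}\prec(h_i)_{[0,M)}$.

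Writing both recursions in the selective-advantage form \eqref{selad},
\[
\hat h_i(dx)=(1-\beta)\,s(x,\hat h_{i-1})\,\hat h_{i-1}(dx)+\beta q(dx),\qquad
h_i(dx)=(1-\beta)\,s(x,h_{i-1})\,h_{i-1}(dx)+\beta q(dx),
\]
the $\beta q$ terms are identical, so restricting to $[0,M)$,
\[
(h_i)_{[0,M)}-(\hat h_i)_{[0,M)}=(1-\beta)\Bigl[\,s(\cdot,h_{i-1})\,(h_{i-1})_{[0,M)}-s(\cdot,\hat h_{i-1})\,(\hat h_{i-1})_{[0,M)}\Bigr],
\]
and it remains to check that the bracketed signed measure is non-negative on $[0,M)$. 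For this I would invoke Assumption~1 with $u=h_{i-1}$ and $v=\hat h_{i-1}$: its hypothesis $v_{[0,M)}\prec u_{[0,M)}$ is exactly the inductive hypothesis, so $s(x,h_{i-1})\ge s(x,\hat h_{i-1})\ge 0$ for all $x\in[0,M]$. Then, for any measurable $B\subseteq[0,M)$, I would chain
\[
\int_B s(x,h_{i-1})\,h_{i-1}(dx)\ \ge\ \int_B s(x,h_{i-1})\,\hat h_{i-1}(dx)\ \ge\ \int_B s(x,\hat h_{i-1})\,\hat h_{i-1}(dx),
\]
the first inequality because $(h_{i-1})_{[0,M)}-(\hat h_{i-1})_{[0,M)}\ge 0$ and $s(\cdot,h_{i-1})\ge 0$, the second because of the pointwise bound above and $\hat h_{i-1}\ge 0$. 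This gives $(\hat h_i)_{[0,M)}\prec(h_i)_{[0,M)}$ and closes the induction. The parenthetical mass statement then drops out: since $\hat h_i,h_i$ are probability measures on $[0,M]$, the component relation gives $\hat h_i([0,M))\le h_i([0,M))$, hence $\hat h_i(\{M\})\ge h_i(\{M\})$ (and likewise at generation $0$).

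I do not expect a genuine obstacle here; the two points needing mild care are (i) matching the direction of the component relation to the quantifier structure of Assumption~1 (the more-$M$-concentrated, ``fitter'' measure $\hat h$ must play the role of $v$, not $u$), and (ii) the degenerate normalisation $\int w(x,h_{i-1})h_{i-1}(dx)=0$, where the recursion reads $h_i=(1-\beta)h_{i-1}+\beta q$, i.e.\ \eqref{selad} with $s(\cdot,h_{i-1})\equiv 1$; with that reading (and the same convention built into Assumption~1), the displayed chain of inequalities is unaffected, since $s(\cdot,h_{i-1})\equiv 1$ is still non-negative and still dominates $s(\cdot,\hat h_{i-1})$ on $[0,M]$.
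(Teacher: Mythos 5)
Your proposal is correct and follows essentially the same route as the paper: induction on $i$, applying Assumption~1 with $u=h_{i-1}$, $v=\hat h_{i-1}$ to get the pointwise bound on the selective advantages, and then combining that bound with the component relation to dominate the selection terms measure-by-measure before the common $\beta q$ term is added back. Your write-up merely makes explicit the chain of integral inequalities and the degenerate-normalisation convention that the paper leaves implicit.
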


\begin{proof}

Assume that for some generation $i\geq 0$, we have $(\hat {h}_i)_{[0,M)}\prec({h}_{i})_{[0,M)}$. By (\ref{dominance}), we have 
\begin{equation}\label{order}s(x,h_i)\geq s(x,\hat{h}_i), \forall x\in\mathcal{M},\end{equation}
together with $(\hat{h}_i)_{[0,M)}\prec(h_{i})_{[0,M)}$: 
\begin{equation}\label{com}\Big(s(x,\hat{h}_i)\hat{h}_i(dx)\Big)_{[0,M)}\prec\Big(s(x,{h}_i){h}_i(dx)\Big)_{[0,M)}.\end{equation}
Then using (\ref{selad}), it is clear that $(\hat {h}_{i+1})_{[0,M)}\prec({h}_{i+1})_{[0,M)}$. So the induction suffices to prove the Lemma.

\end{proof}

\begin{proof}[Proof of Theorem \ref{c1}]
In Lemma \ref{tech}, let $\hat h_i=p_i$ and $h_i=p_{i+1}$ for $i\geq 0$. Then $(\hat h_i)_{i\geq 0}, (h_i)_{i\geq 0}$ satisfy the conditions in Lemma \ref{tech}. Therefore, we obtain $(p_{i})_{[0,M)}\prec (p_{i+1})_{[0,M)}, \forall i\geq 0$. Hence there exists a set function $p^*$ on the measurable sets of $\mathcal{M}$ 
$$\sup_{\text{ any measurable set } A\subset \mathcal{M} }|p^*(A)-p_i(A)|\to 0.$$
So $p^*$ must be a probability measure on $\mathcal{M}$ and $ (p_{i})_{[0,M)}\prec (p_{i+1})_{[0,M)}\prec (p^*)_{[0,M)}.$
\end{proof}

\begin{corollary}\label{limit=}
Let $p_0$ be the Dirac measure at $M$. Then 
\begin{equation}\label{coeff}
s(x,p_i)\text{ increases on $i$ and } s(x,p^*)\geq \sup_{i\geq 0}\{s(x,p_i)\} \text{ for any }x\in \mathcal{M}.  
\end{equation}
And the probability measure $p^*$ satisfies: 
\begin{equation}\label{=}p^*(dx)=(1-\beta)\sup_{i\geq 0}\{s(x,p_i)\}p^*(dx)+\beta q(dx),\end{equation}
and 
 \begin{equation}\label{<}p^*(dx)\prec (1-\beta)s(x,p^*)p^*(dx)+\beta q(dx). \end{equation}
In particular if for any $x\in\mathcal{M}$, $w(x,\cdot)$ is a continuous function on $\P(\mathcal{M})$ equipped with the total variation norm, then (\ref{<}) is in fact an equality.
\end{corollary}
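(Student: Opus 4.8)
The plan is to read the whole corollary off the recurrence $p_{i+1}(dx)=(1-\beta)s(x,p_i)p_i(dx)+\beta q(dx)$ of (\ref{selad}), combined with the two outputs of Theorem~\ref{c1} available in this setting: the monotonicity $(p_i)_{[0,M)}\prec(p_{i+1})_{[0,M)}\prec(p^*)_{[0,M)}$ for all $i$ (and, since $p_0=\delta_M$, the sequence $p_i(\{M\})$ is nonincreasing with limit $p^*(\{M\})$, by Lemma~\ref{tech}), together with the strong convergence $p_i\to p^*$.

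The inequalities (\ref{coeff}) come for free from Assumption~1. Applying (\ref{dominance}) to the pair $v=p_i$, $u=p_{i+1}$ — legitimate since $(p_i)_{[0,M)}\prec(p_{i+1})_{[0,M)}$ — gives $s(x,p_{i+1})\ge s(x,p_i)$ for every $x$, so $i\mapsto s(x,p_i)$ is nondecreasing; taking instead $u=p^*$ gives $s(x,p^*)\ge s(x,p_i)$ for all $i$, hence $s(x,p^*)\ge\sup_{i}s(x,p_i)=:\sigma(x)$.

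The identity (\ref{=}) is the heart of the matter and I would obtain it by letting $i\to\infty$ in the recurrence. Put $\nu_i(dx):=(1-\beta)s(x,p_i)p_i(dx)=p_{i+1}(dx)-\beta q(dx)$; since $p_{i+1}\to p^*$ strongly, $\nu_i$ converges strongly to $\mu:=p^*-\beta q$, which is therefore a non-negative measure with $p^*-\mu=\beta q$, so $\mu\prec p^*$ and in particular $\mu$, $\beta q$ and every $(p_i)_{[0,M)}$ are absolutely continuous with respect to $p^*$. On $[0,M)$ write $\rho_i:=dp_i/dp^*\in[0,1]$; the domination $(p_i)_{[0,M)}\prec(p_{i+1})_{[0,M)}$ makes $(\rho_i)_i$ nondecreasing, and $\int_{[0,M)}\rho_i(x)\,p^*(dx)=p_i([0,M))\to p^*([0,M))$ forces $\rho_i\uparrow 1$ for $p^*$-almost every $x\in[0,M)$. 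Since also $s(\cdot,p_i)\uparrow\sigma$ pointwise, monotone convergence gives, for each measurable $A\subset[0,M)$,
\[
\nu_i(A)=(1-\beta)\int_A s(x,p_i)\rho_i(x)\,p^*(dx)\ \longrightarrow\ (1-\beta)\int_A\sigma(x)\,p^*(dx),
\]
and comparing with $\nu_i(A)\to\mu(A)$ yields $\mu_{[0,M)}=(1-\beta)\sigma\cdot(p^*)_{[0,M)}$, i.e.\ (\ref{=}) off the point $M$. The atom $\{M\}$ must be treated separately on the scalar relation $p_{i+1}(\{M\})=(1-\beta)s(M,p_i)p_i(\{M\})+\beta q(\{M\})$: here $s(M,p_i)\uparrow\sigma(M)\in[0,\infty]$ while $p_i(\{M\})\downarrow p^*(\{M\})$, so if $\sigma(M)<\infty$ the product converges to $\sigma(M)p^*(\{M\})$ and (\ref{=}) at $\{M\}$ follows, whereas if $\sigma(M)=\infty$ the bound $(1-\beta)s(M,p_i)p_i(\{M\})\le p_{i+1}(\{M\})\le 1$ together with $p_i(\{M\})\ge p^*(\{M\})$ forces $p^*(\{M\})=0$, and then $p_{i+1}(\{M\})\ge\beta q(\{M\})$ forces $q(\{M\})=0$, so both sides of (\ref{=}) vanish on $\{M\}$. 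The crux of the whole proof is this step of identifying the strong limit of $\nu_i$ with $(1-\beta)\sigma\,p^*$, and I expect the condensation atom at $M$ to be the fiddliest part.

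Finally, (\ref{<}) is immediate: by (\ref{coeff}), $s(x,p^*)\ge\sigma(x)$, so $(1-\beta)s(x,p^*)p^*(dx)+\beta q(dx)$ dominates $(1-\beta)\sigma(x)p^*(dx)+\beta q(dx)$, which equals $p^*(dx)$ by (\ref{=}). For the equality case, total-variation continuity of $w(x,\cdot)$ together with $p_i\to p^*$ strongly gives $w(x,p_i)\to w(x,p^*)$ for each $x$, and splitting $\int w(y,p_i)p_i(dy)-\int w(y,p^*)p^*(dy)$ through the fixed measure $p^*$ shows $\int w(y,p_i)p_i(dy)\to\int w(y,p^*)p^*(dy)$, so $s(x,p_i)\to s(x,p^*)$ for every $x$; being nondecreasing in $i$ this forces $\sigma(x)=s(x,p^*)$, and then (\ref{=}) reads $p^*(dx)=(1-\beta)s(x,p^*)p^*(dx)+\beta q(dx)$, i.e.\ (\ref{<}) holds with equality. (One can also see this directly by integrating (\ref{=}) to get $\int\sigma(x)\,p^*(dx)=1=\int s(x,p^*)\,p^*(dx)$, which with $\sigma\le s(\cdot,p^*)$ already gives $\sigma=s(\cdot,p^*)$ $p^*$-a.e.; continuity merely upgrades this to every point.)
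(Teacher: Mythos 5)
Your proposal is correct and takes essentially the same route as the paper: the paper's proof of Corollary~\ref{limit=} simply passes to the limit in the recurrence (\ref{selad}) using Theorem~\ref{c1}, (\ref{dominance}) and the monotonicity of $s(x,p_i)$, and your argument is that same limit passage with the measure-theoretic details (Radon--Nikodym densities $\rho_i\uparrow 1$ on $[0,M)$, monotone convergence, and the separate treatment of the atom at $M$) carefully filled in where the paper only says ``so in the limit we obtain (\ref{=}) and (\ref{<})''. Your parenthetical remark that integrating (\ref{=}) already forces $\sup_i s(\cdot,p_i)=s(\cdot,p^*)$ $p^*$-a.e., so that (\ref{<}) is an equality of measures even without the continuity hypothesis, is a correct and mildly stronger observation than the paper's closing sentence.
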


\begin{proof}
By Theorem \ref{c1} and (\ref{dominance}), we obtain (\ref{coeff}).  Recall (\ref{general}):
$$p_i(dx)=(1-\beta)s(x,p_{i-1})p_{i-1}(dx)+\beta q(dx), i\geq 1. $$ 
Due to Theorem \ref{c1}, $p_i$ and $p_{i-1}$ converges strongly to $p^*$ and $s(x,p_{i-1})$ converges increasingly to $\sup_{i\geq 0}\{s(x,p_{i})\}$ for $x\in\mathcal{M}$ fixed as $i$ tends to infinity. So in the limit, using also (\ref{coeff}), we obtain (\ref{=}) and (\ref{<}). The last point is easy to see using some continuity arguments. 

\end{proof}

\begin{proof}[Proof of Proposition \ref{end}]
Let $\hat p_0=\delta_{M}$ and $\hat p_i$ the type distribution at generation $i\geq 1$ under mutant type distribution $q$. In Lemma \ref{tech}, let $\hat h_i=\hat p_i$ and $h_i=p_i$ for any $i\geq 0$. It is easy to see  that Lemma \ref{tech} applies. So we have for any $i\geq 0$
\begin{equation}\label{ppi}(\hat p_i)_{[0,M)}\prec (p_i)_{[0,M)}\text{ and }0\leq p_i(\{M\})\leq \hat p_i(\{M\}).\end{equation}
By Theorem \ref{c1}, $(\hat p_i)_{i\geq 0}$ converges strongly to $p^*$. If $p_i(\{M\})$ converges to $p^*(\{M\})$, using (\ref{ppi}) and the fact that $\int p_i(dx)=\int \hat p_i(dx)=1$, $(p_i)_{i\geq 0}$ converges strongly to $p^*$. 

\end{proof}

\begin{proof}[Proof of Corollary \ref{nomass}]
In the proof of Proposition \ref{end}, since $\hat p_i(\{M\})$ tends to $p^*(\{M\})=0$, using (\ref{ppi}), we also obtain $p_i(\{M\})$ converges to $p^*(\{M\})=0$. The strong convergence follows by Proposition \ref{end}.
\end{proof}

\begin{proof}[Proof of Theorem \ref{c11}]
Note that $p_1(\{M\})>0$. For convenience we assume $p_0(\{M\})>0.$

Let $(\hat p_i)_{i\geq 0}$ be the same as in the proof of Proposition \ref{end} and hence (\ref{ppi}) holds. Let 
\begin{equation}\label{r}R_{i,1}(x)=(1-\beta)^i\prod_{k=0}^{i-1}s(x,p_k), \,\,\, R_{i,2}(x)=\beta\sum_{k=0}^{i-1}(1-\beta)^{k}\prod_{j=0}^{k-1}s(x,p_{i-j})\end{equation}
and
\begin{equation}\label{t}T_{i,1}(x)=(1-\beta)^i\prod_{k=0}^{i-1}s(x,\hat p_k)\1_{x=M}, \,\,\, T_{i,2}(x)=\beta\sum_{k=0}^{i-1}(1-\beta)^{k}\prod_{j=0}^{k-1}s(x,\hat p_{i-j}).\end{equation}
Then it follows from (\ref{selad}) that
\begin{align}\label{pm}p_i(\{M\})&=R_{i,1}(M)p_0(\{M\})+R_{i,2}(M)q(\{M\}),\end{align}
and
\begin{align}\label{pmhat}\hat p_i(\{M\})&=T_{i,1}(M)\hat p_0(\{M\})+T_{i,2}(M)q(\{M\})=T_{i,1}(M)+T_{i,2}(M)q(\{M\}).\end{align}

By Theorem \ref{c1} and assumption 1, $s(x,\hat p_i)\leq s(x,\hat p_{i+1})$ for any $x\in\mathcal{M}$ and $i\geq 0$.  Then we must have $s(M,\hat p_i)\leq 1/(1-\beta)$ for any $i\geq 0$, otherwise $(T_{i,1}(M))_{i\geq 0}$ will tend to infinity which is impossible. Consequently, $(T_{i,1}(M))_{i\geq 0}$ decrease either to $0$ or to a strictly positive value. But in fact, the second option is impossible. Otherwise, as $i$ goes to infinity, $s(M,\hat p_i)$ must converge to $1/(1-\beta)$ which entails that $\liminf_{i\to\infty} T_{i,2}(M)=\infty$ and so $\infty=\liminf_{i\to\infty} T_{i,2}(M)q(\{M\})=\liminf_{i\to\infty}\hat p_i(\{M\})$ (since $q(\{M\})>0$). Therefore $(T_{i,1}(\{M\}))_{i\geq 0}$ converges to $0$. 

By (\ref{ppi}) and assumption 1, we have $s(x, p_i)\geq s(x,\hat p_i)$ for any $x\in \mathcal{M}, i\geq 0$ which entails $R_{i,2}(M)\geq T_{i,2}(M)$,  and $R_{i,1}(M)\leq T_{i,1}(M)/p_0(\{M\})$ to ensure that $p_i(\{M\})<\hat p_i(\{M\})$. Therefore as $i$ goes to infinity, $R_{i,1}(M)$ converges to $0$ and  $p_i(\{M\})$ converge to $p^*(\{M\})$. Strong convergence follows by Proposition \ref{end}. 

\end{proof}

\subsection{Proof of Theorem \ref{c2}}

\begin{proof}[Proof of Theorem \ref{c2}] Let $(\hat p_i)_{i\geq 0}$ be the same as in the proof of Proposition \ref{end} and then (\ref{ppi}) holds. If $p^*(\{M\})=0$, then strong convergence follows due to Corollary \ref{nomass}. So here we assume $p^*(\{M\})>0$. Note that $\hat p_0(\{M\})=1, q(\{M\})=0$. So (\ref{pm}) and (\ref{pmhat}) become

\begin{equation}p_i(\{M\})=R_{i,1}(M)p_0(\{M\}), \,\,\,\hat p_i(\{M\})=T_{i,1}(M).\end{equation}
By the same arguments in the proof of Theorem \ref{c11}, $T_{i,1}(\{M\})$ decreases to $p^*(\{M\})$ (which is strictly positive by assumption) and 
$T_{i,1}(M)\leq R_{i,1}(M)\leq T_{i,1}(M)/p_0(\{M\})$ for any $i\geq 0$.

Let $0\leq a<M$. Due to assumption $2$,  for any $\epsilon>0$ and any $i$, if $p_i([0,a])-\hat p_i([0,a])>\epsilon$ (the difference is always nonnegative due to (\ref{ppi})), then 
\begin{equation}\label{ccf}s(M,p_i)\geq s(M,\hat p_i)c(a,\epsilon),\text{ with } c(a,\epsilon)>1.\end{equation}
However, the above display can hold only for finitely many $i$s. Otherwise, $R_{i,1}(\{M\})$ will tend to infinity.  That implies $\lim_{i\to\infty}p_i([0,a])-\hat p_i([0,a])=0.$ So together with (\ref{ppi}), $(p_i)_{[0,a]}$ converges strongly to $(p^*)_{[0,a]}$. It therefore follows the weak convergence of $(p_i)_{i\geq 0}$ to $p^*.$ 
\end{proof}

\subsection{Proof of Theorem \ref{c3}}
We start with a lemma. 
\begin{lemma}\label{tech1}
Let only the assumption 1 be true. Let $p_0$ and $\hat{p}_0$ be two initial type distributions such that $0<m_{p_0}\leq m_{\hat{p}_0}$ and $(\hat p_0)_{[0,m_{p_0})}\prec (p_0)_{[0,m_{p_0})}$. The type distributions of generation $i$ are denoted respectively by ${p_i}$, ${\hat{p}_i}$. Assume that the mutation type distribution for $p_0$ is $q$ and for $\hat p_0$ is $\hat q$ such that $q=\hat q^{m_{p_0}}$. Then for any $i\geq 1$, 
$(\hat{p}_i)_{[0,m_{p_0})}\prec({{p}_i})_{[0,m_{p_0})}$ and $0_{(m_{p_0},M]}=({{p}_i})_{(m_{p_0},M]}\prec (\hat{p}_i)_{(m_{p_0}, M]}$ where $0_{(m_{p_0},M]}$ is a null measure on ${(m_{p_0},M]}.$
\end{lemma}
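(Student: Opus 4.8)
The plan is to prove both assertions simultaneously by induction on $i$, following the pattern of Lemma \ref{tech} but with the comparison confined to the sub-interval $[0,m_{p_0})$. Set $a:=m_{p_0}$. I would carry as induction hypothesis the conjunction: (i) $(\hat p_i)_{[0,a)}\prec(p_i)_{[0,a)}$ and (ii) $p_i$ is supported on $[0,a]$. The base case $i=0$ is immediate: (i) is a hypothesis of the lemma and (ii) holds because $p_0$ is supported on $[0,m_{p_0}]=[0,a]$ by definition of $m_{p_0}$. Two facts about the mutation measures will be used throughout: $q=\hat q^{a}$ is supported on $[0,a]$ (indeed $m_q\le a$), and $q$ and $\hat q$ coincide on $[0,a)$.

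For the inductive step, assertion (ii) at time $i+1$ is the easy half: by (\ref{selad}), using (ii) at time $i$ and $\supp q\subseteq[0,a]$, $p_{i+1}$ is supported on $[0,a]$, so $(p_{i+1})_{(a,M]}$ is the null measure and is trivially a component of $(\hat p_{i+1})_{(a,M]}$. For assertion (i) at time $i+1$, I would restrict (\ref{selad}) to $[0,a)$; since $q$ and $\hat q$ agree there, the mutation contributions cancel and it suffices to show
\begin{equation*}
\big(s(x,\hat p_i)\,\hat p_i(dx)\big)_{[0,a)}\ \prec\ \big(s(x,p_i)\,p_i(dx)\big)_{[0,a)}.
\end{equation*}
Writing $(p_i)_{[0,a)}=(\hat p_i)_{[0,a)}+\mu_i$ with $\mu_i\in\Q(\mathcal M)$ (this is exactly (i) at time $i$), the right-hand side minus the left-hand side is, on $[0,a)$, the measure
\begin{equation*}
\big(s(x,p_i)-s(x,\hat p_i)\big)\,(\hat p_i)_{[0,a)}(dx)\ +\ s(x,p_i)\,\mu_i(dx);
\end{equation*}
the second summand is nonnegative because $s\ge0$ and $\mu_i\ge0$, so it suffices to establish the pointwise inequality $s(x,p_i)\ge s(x,\hat p_i)$ for all $x\in[0,a)$.

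That pointwise inequality is the heart of the matter and the step I expect to be the main obstacle; it is also the only place where Assumption 1 is needed. One cannot feed $(\hat p_i,p_i)$ directly into Assumption 1, since that would require $(\hat p_i)_{[0,M)}\prec(p_i)_{[0,M)}$, whereas $\hat p_i$ may carry mass on $(a,M)$ while $p_i$ carries none there. What (i) together with $\supp p_i\subseteq[0,a]$ does yield is $D_{p_i}\ge D_{\hat p_i}$ on $\mathcal M$, i.e. $\hat p_i$ stochastically dominates $p_i$, and the intent of Assumption 1 is precisely that a stochastically larger --- hence fitter, in the sign convention of this model --- population gives each fixed type a smaller selective advantage. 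To get there from Assumption 1, I would interpose the collapsed measure $(\hat p_i)^{a}=(\hat p_i)_{[0,a)}+\hat p_i([a,M])\,\delta_a$, which is supported on $[0,a]$ and satisfies $\big((\hat p_i)^{a}\big)_{[0,a)}\prec(p_i)_{[0,a)}$, and aim at the chain $s(x,\hat p_i)\le s(x,(\hat p_i)^{a})\le s(x,p_i)$ on $[0,a)$: the first link relocates the mass of $\hat p_i$ on $[a,M]$ onto the atom at $a$, and the second exhibits $(\hat p_i)^{a}$ as $p_i$ with mass pushed up onto the atom at $a$. The obstacle is that neither rearrangement is of the single special form that Assumption 1 literally controls --- a comparison of two populations one of which is obtained from the other by moving mass onto the endpoint $M$ --- so closing the chain calls for extracting more from Assumption 1 (for instance by a telescoping argument through measures with ever more mass at $a$, or by passing to the truncated model on $[0,a]$), and this reconciliation is where the real difficulty sits. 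Once $s(\cdot,p_i)\ge s(\cdot,\hat p_i)$ on $[0,a)$ is secured, both halves of the inductive step close and the lemma follows; the degenerate case $\int w(x,p_i)\,p_i(dx)=0$, in which (\ref{selad}) reads $p_{i+1}=(1-\beta)p_i+\beta q$, is handled directly and causes no trouble.
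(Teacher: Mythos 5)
Your skeleton is exactly the argument the paper intends: its entire proof of Lemma \ref{tech1} is the one sentence that it ``follows the same lines'' as Lemma \ref{tech}, i.e.\ the induction you set up, carrying the support statement $\supp p_i\subseteq[0,m_{p_0}]$ alongside the component relation, cancelling the mutation terms on $[0,m_{p_0})$ because $\hat q$ and $q=\hat q^{\,m_{p_0}}$ agree there, and closing the step with the pointwise inequality $s(x,p_i)\geq s(x,\hat p_i)$ drawn from Assumption 1. The place where you stop is precisely the place where the paper invokes (\ref{dominance}) without comment. As submitted, therefore, your proposal is incomplete: the inequality $s(\cdot,p_i)\geq s(\cdot,\hat p_i)$ on $[0,m_{p_0})$, which you correctly isolate as the only nontrivial point, is left unproved, and the chain you sketch through the collapsed measure $(\hat p_i)^{m_{p_0}}$ does not close it --- as you yourself note, neither link is an instance of the literal hypothesis of Assumption 1, and no telescoping through measures with mass accumulating at $m_{p_0}$ will produce the required relation on $[0,M)$.

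That said, your hesitation exposes a real imprecision in the paper rather than a misunderstanding on your part: when $\hat p_0$ or $\hat q$ charges $(m_{p_0},M)$ (as happens in both invocations of the lemma in the proof of Theorem \ref{c3}), the relation $(\hat p_i)_{[0,M)}\prec(p_i)_{[0,M)}$ genuinely fails, so Assumption 1 verbatim does not apply to the pair $(p_i,\hat p_i)$. The intended escape is the one you half-name and then abandon. Your induction hypothesis together with $\supp p_i\subseteq[0,m_{p_0}]$ gives full stochastic dominance $D_{p_i}(x)\geq D_{\hat p_i}(x)$ for every $x\in\mathcal M$ (by the component relation for $x<m_{p_0}$, and because $D_{p_i}(x)=1$ for $x\geq m_{p_0}$); and in Section 5 Assumption 1 is in fact verified for Kingman's and Lenski's models under exactly this weaker hypothesis --- the computations there use only $u\leq v$ in the stochastic order, never the component relation itself. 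Read as monotonicity of $s(x,\cdot)$ with respect to stochastic dominance, the assumption applies directly to $(p_i,\hat p_i)$ and your inductive step closes with no interposed measure at all. So the correct diagnosis is: your proof breaks down at the same point as the paper's, and the repair is not a cleverer use of Assumption 1 as stated but its restatement (or strengthening) in terms of the stochastic order.
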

\begin{proof}The proof follows the same lines in the proof of Lemma \ref{tech}, using assumption 1.
\end{proof}

%\begin{corollary}\label{null}
%For any $0\leq a<b\leq M$. we have $(p^{b,*})_{[0,a)}\prec (p^{a,*})_{[0,a)}$ and $0_{(a,M]}=({{p}^a^*})_{(a,M]}\prec (\hat{p}^b^*)_{(a, M]}$. 
%\end{corollary}
%\begin{proof}
%In Lemma \ref{tech1}, for any $i\geq 1$, we let $\hat p_i=p_{i,b}$ with $\hat q=q^b$ and $p_i=p_i^a$ with $q=q^a$.   
%\end{proof}

Given initial type distribution $p_0$ with mutant type distribution $q$ and another couple $\hat p_0, \hat q$, define a function $\Delta: (p_0,q,\hat p_0,\hat q)\to \{0,1\}$ such that $\Delta(p_0,q,\hat p_0,\hat q)=1$ if and only if these distributions satisfy the assumptions in Lemma \ref{tech1}. 

\begin{proof}[Proof of Theorem \ref{c3}]
Let $0< a<b<M$. Let $h_0=(p_0)^b$. Let $h_i$ be the type distribution of generation $i$ with $h_0$ as initial distribution and $q^b$ as the mutant distribution.  Then $m_{q^b}\leq m_{h_0}=b$ and $h_0(\{b\})+q^b(\{b\})>0$. Due to Theorem \ref{c11} and Theorem \ref{c2}, $(h_i)_{i\geq 0}$ converges strongly to $p^{b,*}$ on any $[0,a]$. 

It is easy to verify that $\Delta(h_0,q^b,p_0,q)=1$ and $\Delta(p_0,q,\delta_{M},q)=1$. Let $(\hat p_i)_{i\geq 0}$ be the same as in (\ref{ppi}). We could apply Lemma \ref{tech1} which gives that 
$$(\hat p_i)_{[0,b)}\prec (p_i)_{[0,b)}\prec (h_i)_{[0,b)}, \forall i\geq 0$$
and in consequence
\begin{equation}\label{prec}(\hat p_i)_{[0,a]}\prec (p_i)_{[0,a]}\prec (h_i)_{[0,a]}.\end{equation}

According to assumption 3, $p^{b,*}$ converge weakly to $p^*$ as $b$ tends to $M$. Then by Portmanteau theorem (\cite{Bi}), 
$\limsup_{b\to M} p^{b,*}([0,a])\leq p^*([0,a])$. Note that as $i$ tends to infinity, $(h_i)_{[0,a]}$ converges strongly to $(p^{b,*})_{[0,a]}$, so as $(\hat p_i)_{[0,a]}$ to $(p^*)_{[0,a]}$. Together with (\ref{prec}), $(p^*)_{[0,a]}\prec(p^{b,*})_{[0,a]}$. Therefore $(p^{b,*})_{[0,a]}$ converges strongly to $(p^*)_{[0,a]}$ as $b$ tends to $M$ from left. Since $b$ can be any number between $a$ and $M$, using again (\ref{prec}), $(p_i)_{[0,a]}$ converges strongly to $(p^*)_{[0,a]}$. Then the proof is completed. \end{proof}

\section{Applications}
\subsection{Kingman's model}
We verify the assumptions $1,2$ and $3$ for Kingman's model. Recall that $M=1$. 

%\noindent \textit{Assumptions 1 and 2:} 
{\it Assumption 1}: It is clear that $u\leq v$ (i.e., $D_u(x)\geq D_v(x), \forall x\geq 0$). Then $\int xu(dx)=\int(1-D_u(x))dx\leq \int(1-D_v(x))dx= \int xv(dx)$ which implies $s(x,u)=\frac{x}{\int xu(dx)}\geq s(x,v)=\frac{x}{\int xv(dx)}$ for any $x\in \mathcal{M}.
$ 

{\it Assumption 2}: We have
\begin{align*}\int xu(dx)=\int(1-D_{u}(x))dx&\leq \int(1-D_{v}(x)-\epsilon\1_{(a,1)}(x))dx=\int x v(dx)-\epsilon(1-a)&\\
&\leq \int x v(dx)(1-\epsilon(1-a))&.\end{align*}
where the second inequality is due to $0< \int xv(dx)\leq 1.$ So
$$s(M,u)=\frac{M}{\int xu(dx)}\geq \frac{M}{\int xv(dx)}\frac{1}{1-\epsilon(1-a)}=s(M,v)\frac{1}{1-\epsilon(1-a)}.$$
One can take $c(a,\epsilon)=\frac{1}{1-\epsilon(1-a)}$ to satisfy assumption $2$.

{\it Assumption 3}:  We need to prove that $p^{a,*}$ converges weakly to $p^*$ as $a$ tends to $1$. We shall derive explicit formulas for $p^{a,*}$ and $p^*$. According to Corollary \ref{limit=}, it is easy to see that  
\begin{equation}p^{a,*}(dx)=(1-\beta)\frac{xp^{a,*}(dx)}{\int xp^{a,*}(dx)}+\beta q^a(dx),\,0\leq a\leq 1.
\end{equation}
That is,
\begin{equation}\label{transform}p^{a,*}(dx)\Big(1-(1-\beta)\frac{x}{\int xp^{a,*}(dx)}\Big)=\beta q^a(dx).\end{equation}
Note that $\int xp^{a,*}(dx)$ determines $p^{a,*}$. By monotonicity, the above equation has a unique solution $p^{a,*}$. 
A simple calculation gives the following:
%Then necessarily, $1-(1-\beta)\frac{x}{\int xp^{a,*}(dx)}\geq 0$ for any $x\in [0,1]$. Equivalently, $\int xp^{a,*}(dx)\geq1-\beta.$

%It is easy to see that $m_{p^{a,*}}=a\geq m_{q^a}$ for any $0\leq a\leq 1$. The following lemma reformulates Kingman's criteria for the two convergence regimes.   

\begin{lemma}\label{item}
Following the above discussion,  for any $0< a\leq 1,$
\begin{enumerate}
\item If $\int \frac{\beta q^a(dx)}{1-x/a}>1$, then $p^{a,*}(dx)=\frac{\beta s_aq^a(dx)}{s_a-(1-\beta)x}$ where $s_a$ is the unique solution of equation $1=\int \frac{\beta s_aq^a(dx)}{s_a-(1-\beta)x}$.
\item If $\int \frac{\beta q^a(dx)}{1-x/a}\leq 1$, then $p^{a,*}(dx)=\frac{\beta q^a(dx)}{1-x/a}+(1-\int \frac{\beta q^a(dx)}{1-x/a})\delta_a$.
\end{enumerate}
\end{lemma}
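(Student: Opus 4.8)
The plan is to solve the fixed-point identity (\ref{transform}) for $p^{a,\ast}$ in closed form, separating the two cases according to whether the maximal value $a$ carries a surplus atom in the limit. Since (\ref{transform}) has already been recorded and, as noted there, $\int x\,p^{a,\ast}(dx)$ determines $p^{a,\ast}$, it suffices in each regime to exhibit one probability measure $p$ and one value $c=\int x\,p(dx)$ solving $p(dx)\bigl(1-(1-\beta)x/c\bigr)=\beta q^a(dx)$, under the extra constraint that singles out the measure reached from $p_0=\delta_a$. That constraint does need attention: (\ref{transform}) by itself may be solved by other stationary measures (for instance, if $q^a=\delta_{m_q}$ with $m_q<(1-\beta)a$, the Dirac mass $\delta_{m_q}$ satisfies it) which are not the limit from $\delta_a$, and it is the monotonicity already invoked before the lemma, together with the inequalities appearing below ($s_a>(1-\beta)a$ in the first case, the surplus atom sitting exactly at $a$ in the second), that removes these. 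This is the step I expect to cost the most thought; everything else is two short algebraic identities.

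The book-keeping device is the function $\Psi(c):=\int\frac{\beta c\,q^a(dx)}{c-(1-\beta)x}$, defined for $c>(1-\beta)m_{q^a}$, with the convention $\Psi(c):=+\infty$ when the integral diverges (possible only if $q^a$ has an atom at $m_{q^a}=a$). It is continuous, strictly decreasing unless $q^a=\delta_0$, and $\Psi(c)\to\beta<1$ as $c\to\infty$. By cancelling the common factor $(1-\beta)$ one gets, reading the value as a left limit, $\lim_{c\downarrow(1-\beta)a}\Psi(c)=\int\frac{\beta a\,q^a(dx)}{a-x}=\int\frac{\beta q^a(dx)}{1-x/a}$, so the dichotomy of the lemma is precisely $\lim_{c\downarrow(1-\beta)a}\Psi>1$ versus $\le1$.

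In the regime $\int\frac{\beta q^a(dx)}{1-x/a}>1$: this left limit exceeds $1$ while $\Psi$ tends to $\beta<1$ at infinity, so by the intermediate value theorem and strict monotonicity there is a unique $s_a\in\bigl((1-\beta)a,\infty\bigr)$ with $\Psi(s_a)=1$. Put $p(dx):=\frac{\beta s_a q^a(dx)}{s_a-(1-\beta)x}$; it is nonnegative because $s_a>(1-\beta)a\ge(1-\beta)m_{q^a}$, has total mass $1$ because $\Psi(s_a)=1$, and is supported in $[0,a]$. The single computation is $\int x\,p(dx)=s_a$: using $\frac{x}{s_a-(1-\beta)x}=\frac1{1-\beta}\bigl(\frac{s_a}{s_a-(1-\beta)x}-1\bigr)$ and integrating against $\beta q^a$ rewrites $\int x\,p(dx)$ as $\frac{s_a}{1-\beta}\bigl(\Psi(s_a)-\beta\bigr)=s_a$. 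Then (\ref{transform}) holds with $c=s_a$, and the uniqueness clause identifies this $p$ with $p^{a,\ast}$ (incidentally giving $s_a=\int x\,p^{a,\ast}(dx)\le a$).

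In the regime $\int\frac{\beta q^a(dx)}{1-x/a}\le1$: this forces $q^a(\{a\})=0$. Put $p(dx):=\frac{\beta q^a(dx)}{1-x/a}+\bigl(1-\int\frac{\beta q^a(dy)}{1-y/a}\bigr)\delta_a(dx)$ and $c=(1-\beta)a$. One verifies directly that $p$ is a probability measure (the weight of $\delta_a$ is nonnegative by hypothesis, the $[0,a)$-part is finite), that (\ref{transform}) holds — with $c=(1-\beta)a$ the factor $1-(1-\beta)x/c$ equals $1-x/a$ on $[0,a)$ and vanishes at $x=a$, where it meets $q^a(\{a\})=0$ — and that $\int x\,p(dx)=(1-\beta)a$, again by the elementary $\frac{x}{a-x}=\frac{a}{a-x}-1$. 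Uniqueness closes this case, and the boundary $\int\frac{\beta q^a(dx)}{1-x/a}=1$ is consistent with the first case (there $s_a=(1-\beta)a$ and the surplus atom has weight $0$). Specialising to $a=1$, $q^1=q$, this reproduces the two formulas of Theorem~\ref{King}.
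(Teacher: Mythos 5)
Your proposal is correct and follows essentially the same route as the paper: the paper also obtains the two formulas by solving the stationarity identity (\ref{transform}), disposing of uniqueness with the one-line remark ``by monotonicity, the above equation has a unique solution,'' and leaving the two algebraic verifications as ``a simple calculation.'' Your $\Psi$-based case split and the two integral identities are exactly that calculation carried out, and the non-uniqueness of stationary solutions you flag (e.g.\ $\delta_{m_q}$ for $q^a=\delta_{m_q}$) is a real subtlety that the paper's own proof addresses no more fully than you do.
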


\begin{remark}
Notice that if we replace $a$ by $1$, then we obtain the same limit distributions as in Theorem \ref{king}.
\end{remark}

Now we can verify the assumption 3 by Lemma \ref{item}. 
\begin{corollary}[assumption 3]\label{3king}
As $a$ tends to $1$ from left,  $p^{a,*}$ converges weakly to $p^*$.
\end{corollary}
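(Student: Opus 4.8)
The plan is to use the explicit formulas from Lemma \ref{item} and pass to the limit $a \to 1^-$, treating the two cases of Theorem \ref{King} (equivalently Lemma \ref{item} with $a=1$) separately. First I would record the basic continuity facts: as $a \to 1^-$, the pushed-down mutation measure $q^a = q_{[0,a)} + q([a,1])\delta_a$ converges weakly to $q^1 = q$ (since $q([a,1]) \to q(\{1\})$ and mass on $[a,1]$ is transported to $a \to 1$), and consequently the map $a \mapsto \int \frac{\beta q^a(dx)}{1-x/a}$, which is monotone and lies in $[0,\infty]$, has a limit as $a \to 1^-$; by monotone/dominated convergence this limit is $\int \frac{\beta q(dx)}{1-x} = \int \frac{\beta q(dx)}{1-x}$, finite or not. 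So the ``phase'' of the limit formula for $p^{a,*}$ stabilizes for $a$ close to $1$ and matches the phase of $p^*$, except possibly in the borderline equality case $\int \frac{\beta q(dx)}{1-x} = 1$, which I would handle by a continuity argument at the boundary between the two regimes.

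Next, in Case 1 ($\int \frac{\beta q(dx)}{1-x} > 1$, so $p^*(\{1\})=0$ and $p^{a,*}(dx) = \frac{\beta s_a q^a(dx)}{s_a - (1-\beta)x}$): the key step is to show $s_a \to s$, where $s$ solves $\int \frac{\beta s\, q(dx)}{s-(1-\beta)x}=1$. This follows because for each fixed $t \in ((1-\beta), M(1-\beta) + \text{something}]$... more precisely, the function $(a,t) \mapsto \int \frac{\beta t\, q^a(dx)}{t-(1-\beta)x}$ is jointly continuous in an appropriate range (the integrand is bounded and continuous once $t$ is bounded away from $(1-\beta)\sup\supp$, and $q^a \Rightarrow q$), strictly decreasing in $t$, so the unique root $s_a$ depends continuously on $a$; hence $s_a \to s$. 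Then $p^{a,*} = \frac{\beta s_a q^a}{s_a - (1-\beta)\,\cdot}$ converges weakly to $\frac{\beta s\, q}{s-(1-\beta)\,\cdot} = p^*$ because we are integrating a uniformly convergent family of bounded continuous functions against the weakly convergent measures $q^a$. In Case 2 ($\int \frac{\beta q(dx)}{1-x} \le 1$, so $p^*$ has an atom $(1-\int\frac{\beta q(dy)}{1-y})\delta_1$ at $1$): for $a$ near $1$ we are (generically) again in case 2 of Lemma \ref{item}, $p^{a,*}(dx) = \frac{\beta q^a(dx)}{1-x/a} + (1 - \int\frac{\beta q^a(dy)}{1-y/a})\delta_a$; testing against a bounded continuous $f$, the absolutely continuous part converges to $\int f(x)\frac{\beta q(dx)}{1-x}$ (monotone/dominated convergence plus $q^a \Rightarrow q$), the atom mass converges to $1 - \int \frac{\beta q(dy)}{1-y}$, and $f(a) \to f(1)$, giving exactly $\int f \, dp^*$.

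The main obstacle I expect is the borderline case $\int \frac{\beta q(dx)}{1-x} = 1$ and, more generally, making sure the integrand $\frac{1}{1-x/a}$ is controlled uniformly as $x \to a$ and $a \to 1$ simultaneously: the singularity of $p^{a,*}$ sits right at $m_{q^a}=a$, which is moving, so one cannot naively fix a dominating function on $[0,1]$. The fix is that in Lemma \ref{item}(1) the denominator $s_a - (1-\beta)x$ stays bounded below by a positive constant uniformly in $a$ near $1$ (because $s_a \to s > (1-\beta)$, using that $\int\frac{\beta q(dx)}{1-x}>1$ forces $s > 1-\beta$), so no uniformity problem arises there; and in Lemma \ref{item}(2) the potentially large contribution near $x=a$ is exactly the part that gets absorbed into the atom $\delta_a$, so the bounded test function $f$ sees it as $f(a)\cdot(\text{mass near }a) \to f(1)\cdot(\cdots)$ with no blow-up. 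For the equality case one observes that $p^{a,*}$ may oscillate between the two formulas of Lemma \ref{item} as $a\uparrow 1$, but both formulas are continuous across the threshold $\int\frac{\beta q^a(dx)}{1-x/a}=1$ (there $s_a = 1$ and the case-1 formula reduces to the case-2 formula with zero atom), so the two subsequential limits coincide and equal $p^* = \frac{\beta q(dx)}{1-x}$; a short argument via Portmanteau (as already invoked in the proof of Theorem \ref{c3}) then upgrades this to the claimed weak convergence. I would write the continuity of $a \mapsto s_a$ and the case-2 limit out in a couple of lines each and leave the threshold bookkeeping to a brief remark.
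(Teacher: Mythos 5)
Your overall strategy --- pass to the limit $a\to1^-$ in the explicit formulas of Lemma \ref{item} --- is exactly what the paper intends (the paper states this corollary without a written proof, saying only that it follows from Lemma \ref{item}), and your treatment of Case 1 and of the threshold is essentially sound, apart from one small slip: at the threshold $\int\frac{\beta q^a(dx)}{1-x/a}=1$ the Case-1 parameter is $s_a=a(1-\beta)$, not $s_a=1$.

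There is, however, a genuine gap in the case analysis. You assert that the ``phase'' of $p^{a,*}$ stabilizes and matches that of $p^*$, and in particular that in Case 2 one is ``(generically) again in case 2 of Lemma \ref{item}'' for $a$ near $1$. This fails precisely in the condensation regime: if $m_q=1$ and $q(\{1\})=0$, then $q^a$ carries an atom of mass $q([a,1])>0$ at the point $a$, where the integrand $\frac{1}{1-x/a}$ is infinite, so $\int\frac{\beta q^a(dx)}{1-x/a}=+\infty>1$ for \emph{every} $a<1$ (in particular your claim that these integrals converge to $\int\frac{\beta q(dx)}{1-x}$ is false here). Thus Lemma \ref{item} puts $p^{a,*}$ in case 1, $p^{a,*}(dx)=\frac{\beta s_aq^a(dx)}{s_a-(1-\beta)x}$, for all $a<1$, even when $\int\frac{\beta q(dx)}{1-x}<1$ and the limit $p^*$ is the Case-2 measure with a nontrivial atom at $1$. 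Your Case-2 argument (which reads the atom off the explicit $\delta_a$ term) does not apply there, and your Case-1 argument does not apply either, since in this regime $s_a\downarrow1-\beta$ and the denominator $s_a-(1-\beta)x$ is \emph{not} bounded away from $0$ near $x=a$. What is missing is an extra step: show $s_a\to1-\beta$ (e.g.\ by your monotonicity argument, noting $s_a>a(1-\beta)$ and that a limit point $s'>1-\beta$ would give $1=\int\frac{\beta s'q(dx)}{s'-(1-\beta)x}<\int\frac{\beta q(dx)}{1-x}\le1$, a contradiction), then split a test integral at $1-\delta$ and check that the total mass that $\frac{\beta s_aq^a(dx)}{s_a-(1-\beta)x}$ places on $(1-\delta,a]$ converges, as $a\to1$ and then $\delta\to0$, to $1-\int\frac{\beta q(dx)}{1-x}=p^*(\{1\})$, while the test function is close to its value at $1$ there. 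With that addition the proof is complete; without it, the most interesting instance of the corollary --- the one producing condensation --- is not covered.
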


Hence all three assumptions are satisfied by Kingman's model. At least weak convergence of $(p_i)_{i\geq0}$ to $p^*$ is guaranteed. In particular, Corollary \ref{forking} ensures the strong convergence when $\int\frac{q(dx)}{1-x}>\beta^{-1}$ in Theorem \ref{King}. That covers Kingman's result. 

\subsection{Lenski model}
We would like to apply the results from general model (\ref{general}) to Lenski model. As we will show later (Theorem \ref{lc}), given any $p_0,q$, $(p_i)_{i\geq 0}$ converges at least in weak sense to a unique limit distribution which depends only on $q, M, \beta$. One can imagine that, at some day, the type distribution is $p_0,$ and then next day a positive proportion is replaced by a fitter mutant population with distribution $q$. Assume this procedure can iterate for many days. Then we shall observe that as $i$ tends to infinity that the population gets fitter and fitter but in a deceleration speed to achieve some limit. Note that at the early stage (around $10,000$ generations), a hyperbolic curve, which has a bound, fits to the data. Our general model also has a bound, in the spirit of hyperbolic curve. As time moves on, fitter mutant distribution pattern will appear and the previous bound will be broken up. We shall then in the long term see that the type values increase unboundedly and also sublinearly, since fitter mutant distribution patterns are rarer. So our model provides another point of view to explain the increasing trend of the type values. 

We shall prove that assumptions $1$ to $3$ are satisfied by Lenski model and find the unique limit distribution.

{\it Assumptions 1 and 2:} One just needs to follow the same procedure as for Kingman's model by using distribution functions.  \\
{\it Assumption 3:} The same as for Kingman's model, we give the expression of $p^{a,*}$ for any $0<a\leq M$. Similarly, we have 
\begin{equation}\label{lenski}p^{a,*}(dx)=(1-\beta)\frac{e^{t_{p^{a,*}}x}p^{a,*}(dx)}{\gamma}+\beta q^a(dx)
\end{equation}
which has a unique solution $p^{a,*}.$

In the same spirit of Lemma \ref{item} and Corollary \ref{3king}, we have the corresponding results in Lenski model.
\begin{lemma}\label{item2}
Following the above discussion,  for any $0<a\leq M$
\begin{enumerate}
\item If $\int\frac{\beta q^a(dx)}{1-(\frac{1-\beta}{\gamma})^{1-x/a}}>1$, then $p^{a,*}(dx)=\frac{\beta q^a(dx)}{1-\frac{1-\beta}{\gamma}e^{s_ax}}$, where $s_a$ is the unique solution of equation $1=\int \frac{\beta q^a(dx)}{1-\frac{1-\beta}{\gamma}e^{s_ax}}.$
\item If $\int\frac{\beta q^a(dx)}{1-(\frac{1-\beta}{\gamma})^{1-x/a}}\leq 1$, then $p^{a,*}(dx)=\frac{\beta q^a(dx)}{1-(\frac{1-\beta}{\gamma})^{1-x/a}}+(1-\int\frac{\beta q^a(dx)}{1-(\frac{1-\beta}{\gamma})^{1-x/a}})\delta_a$.
\end{enumerate}
\end{lemma}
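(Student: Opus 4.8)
The plan is to mirror the argument used for Kingman's model (Lemma~\ref{item} and Corollary~\ref{3king}), replacing the fitness $x/\int x\,u(dx)$ by $e^{t_u x}/\gamma$. First I would record that for the Lenski fitness $w(x,u)=e^{t_u x}$ the map $u\mapsto t_u$ is continuous on $\{u\in\P(\mathcal{M}):u(\{0\})<1\}$ in total variation: $t_u$ is the unique root of the continuous, strictly increasing function $t\mapsto\int e^{tx}u(dx)$, which is jointly continuous in $(t,u)$, so continuity of $t_u$ follows. Since $p_0=\delta_a$ with $a>0$ forces $p_i(\{a\})>0$ for every $i$, hence $p^{a,*}(\{0\})<1$, Corollary~\ref{limit=} applies in its equality form and shows that $p^{a,*}$ solves (\ref{lenski}). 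Setting $t:=t_{p^{a,*}}$ and rearranging,
\begin{equation*}
p^{a,*}(dx)\Bigl(1-\tfrac{1-\beta}{\gamma}e^{tx}\Bigr)=\beta q^a(dx),\qquad x\in[0,a];
\end{equation*}
since $\int e^{tx}p^{a,*}(dx)=\gamma>1$ and $p^{a,*}$ is a probability on $[0,a]$ one gets $t>0$, and integrating the display over $\mathcal{M}$ returns $\int e^{tx}p^{a,*}(dx)=\gamma$, so any candidate probability measure of the shapes claimed will automatically have its exponent equal to its own $t$-value.

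Next I would analyse $g_t(x):=1-\tfrac{1-\beta}{\gamma}e^{tx}$, which is continuous, strictly decreasing, and strictly positive at $x=0$ (as $\gamma>1>1-\beta$). Since $g_t\cdot p^{a,*}=\beta q^a\ge0$, the behaviour at the right endpoint is decisive. Using that $p^{a,*}$ is the increasing limit of the iterates started at $\delta_a$ and that $p_i(\{a\})$ is non-increasing in $i$ (the computation from the proof of Theorem~\ref{c2}, with $a$ playing the role of $M$), I split into two cases. If $p^{a,*}(\{a\})=0$, then $g_t>0$ on $[0,a]\cap\supp q^a$, so $p^{a,*}(dx)=\beta q^a(dx)/g_t(x)$ and the normalisation $\int\beta q^a(dx)/g_t(x)=1$ pins down $t$, giving the first item. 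If $p^{a,*}(\{a\})>0$, then the equation at $x=a$ reads $p^{a,*}(\{a\})\,g_t(a)=\beta q^a(\{a\})$: when $q^a(\{a\})>0$ this again forces $g_t(a)>0$ and the first-item form (with the atom at $a$ supplied by $q^a$), while when $q^a(\{a\})=0$ it forces $g_t(a)=0$, i.e. $t=t_0:=\tfrac{1}{a}\ln\tfrac{\gamma}{1-\beta}$, hence $p^{a,*}=\beta q^a_{[0,a)}/g_{t_0}+\bigl(1-\int\beta q^a(dx)/g_{t_0}(x)\bigr)\delta_a$; a short computation using $e^{t_0x}/g_{t_0}(x)=\tfrac{\gamma}{1-\beta}\bigl(1/g_{t_0}(x)-1\bigr)$ confirms $\int e^{t_0x}p^{a,*}(dx)=\gamma$ (so $t_{p^{a,*}}=t_0$ is consistent) and that the atom mass is nonnegative exactly when $\int\beta q^a(dx)/g_{t_0}(x)\le1$. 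Noting $g_{t_0}(x)=1-(\tfrac{1-\beta}{\gamma})^{1-x/a}$, this is the second item and its defining inequality.

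To merge the two cases and identify $s_a$, I would use the monotonicity of $I(t):=\int\beta q^a(dx)/g_t(x)$ on $[0,t_0)$: $I$ is continuous and strictly increasing, $I(0)=\beta\gamma/(\gamma-1+\beta)<1$ (using $\beta<1$), and $I(t)\uparrow\int\beta q^a(dx)/g_{t_0}(x)$ as $t\uparrow t_0$ by monotone convergence (the limit being $+\infty$ when $q^a(\{a\})>0$). Hence if $\int\beta q^a(dx)/g_{t_0}(x)>1$ there is a unique $s_a\in(0,t_0)$ with $I(s_a)=1$ and the first item holds, whereas if $\int\beta q^a(dx)/g_{t_0}(x)\le1$ no such $s_a$ exists, the iterates accumulate mass at $a$, and the second item holds. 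I expect the main obstacle to be precisely the condensation regime: equation (\ref{lenski}) is \emph{not} uniquely solvable in general (it also admits fixed points carrying an atom strictly inside $(0,a)$, or of the form $\delta_b$ with $b<a$), so one must argue that the specific limit $p^{a,*}$ is the solution with exponent $t_0$ and atom at $a$; this is where the monotone structure supplied by Theorem~\ref{c1} and Corollary~\ref{limit=} — the iterates increase on $[0,a)$ while their mass at $a$ decreases — is essential, rather than any naive uniqueness statement for (\ref{lenski}). The remaining verifications are the routine computations already carried out for Kingman's Lemma~\ref{item}.
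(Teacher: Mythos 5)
Your proposal is correct, and it starts where the paper does: invoke Corollary~\ref{limit=} in its equality form (after checking total-variation continuity of $u\mapsto t_u$, which you do) to obtain the fixed-point identity (\ref{lenski}), and then solve $p^{a,*}(dx)\,g_t(x)=\beta q^a(dx)$ with $g_t(x)=1-\frac{1-\beta}{\gamma}e^{tx}$ by examining where $g_t$ vanishes. But the paper's actual proof consists of the phrase ``in the same spirit of Lemma~\ref{item}'', and for Lemma~\ref{item} itself it only asserts that the fixed-point equation ``has a unique solution'' by monotonicity, followed by ``a simple calculation''. You correctly identify that this assertion is the weak point: the fixed-point equation admits extraneous solutions (for instance, when $q^a$ does not charge a neighbourhood of $a$ one can place an atom at any interior zero of $g_t$), so uniqueness must be replaced by an identification of the particular solution to which the iterates converge. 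Your selection argument --- on $[0,a)$ the limit is the increasing limit of $R_{i,2}(x)q^a(dx)$ and hence absolutely continuous with respect to $q^a$ there, so the only admissible extra atom sits at $a$; and the monotone function $I(t)=\int\beta q^a(dx)/g_t(x)$, with $I(0)=\beta\gamma/(\gamma-1+\beta)<1$ and $I(t)\uparrow I(t_0)$ as $t\uparrow t_0$, matches the two forms to the stated integral criterion --- is exactly the content the paper omits, and it is sound. Two small points you should tighten: the inequality $p^{a,*}(\{0\})<1$ (needed for $t_{p^{a,*}}$ to exist) follows from $p_i(\{0\})=R_{i,2}(0)\,q^a(\{0\})\le \beta\gamma/(\gamma-1+\beta)<1$ rather than from $p_i(\{a\})>0$; and the nonnegativity of $g_t$ on $\supp q^a$ (with $q^a(\{g_t=0\})=0$) comes from both sides of the fixed-point identity being nonnegative measures, not from the vanishing of the atom at $a$. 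Neither affects the validity of your argument.
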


\begin{corollary}[assumption 3]
In the Lenski model, $p^{a,*}$ converges weakly to $p^*$ as $a$ tends to $M$. 
\end{corollary}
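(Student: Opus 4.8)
The plan is to mimic exactly the strategy used for Kingman's model in Corollary \ref{3king}, now using the explicit formulas for $p^{a,*}$ provided by Lemma \ref{item2}. The starting point is that, by Corollary \ref{limit=} applied with $p_0 = \delta_a$ and mutant distribution $q^a$, the limit $p^{a,*}$ satisfies the fixed-point equation \eqref{lenski}, and (by the monotonicity of $t \mapsto \int e^{tx} p(dx)$ in the normalizing constant, exactly as in \eqref{transform}) this fixed point is unique and is the one described in Lemma \ref{item2}. So the whole question reduces to taking the limit $a \uparrow M$ in the two explicit cases of Lemma \ref{item2}.

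First I would record the monotone behaviour in $a$. Note $q^a \to q = q^M$ weakly as $a \uparrow M$ (since $q^a$ only moves the mass $q([a,M])$, which tends to $q(\{M\}) = 0$ under the generic hypothesis, from $[a,M]$ down to $a$; in the non-generic case $q(\{M\})>0$ the convergence is still weak and one can argue directly). Also the threshold quantity $\int \frac{\beta q^a(dx)}{1-(\frac{1-\beta}{\gamma})^{1-x/a}}$ is monotone in $a$: as $a$ increases, the exponent $1 - x/a$ decreases for fixed $x<a$, hence $(\frac{1-\beta}{\gamma})^{1-x/a}$ increases (the base is $<1$), hence the denominator $1 - (\frac{1-\beta}{\gamma})^{1-x/a}$ decreases, hence the integrand increases; combined with $q^a$ putting more and more mass near the top, the integral is nondecreasing in $a$ and converges to $\int \frac{\beta q(dx)}{1-(\frac{1-\beta}{\gamma})^{1-x/M}}$. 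Thus there is a critical behaviour governed by whether this limiting integral exceeds $1$ or not, and the two cases of Lemma \ref{item2} for $a$ near $M$ match up with the corresponding two cases for $a=M$, except possibly at the exact threshold where one must check continuity by hand.

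Next, in Case 1 (supercritical), I would show $s_a \to s_M$ as $a \uparrow M$: the defining equation $1 = \int \frac{\beta q^a(dx)}{1 - \frac{1-\beta}{\gamma} e^{s_a x}}$ depends on $a$ through $q^a$ and one checks, by a standard monotonicity/continuity argument on the implicit function $(a,s)\mapsto \int \frac{\beta q^a(dx)}{1 - \frac{1-\beta}{\gamma} e^{s x}}$ (which is continuous and strictly monotone in $s$ on the relevant range where $\frac{1-\beta}{\gamma}e^{sx}<1$), that the solution $s_a$ is continuous in $a$ up to $a=M$. Then $p^{a,*}(dx) = \frac{\beta q^a(dx)}{1 - \frac{1-\beta}{\gamma}e^{s_a x}} \to \frac{\beta q(dx)}{1 - \frac{1-\beta}{\gamma}e^{s_M x}} = p^*(dx)$ weakly, since bounded continuous integrands test the weak convergence $q^a \to q$ and the normalizing factor converges uniformly. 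In Case 2 (subcritical), $p^{a,*}(dx) = \frac{\beta q^a(dx)}{1-(\frac{1-\beta}{\gamma})^{1-x/a}} + (1 - \int \frac{\beta q^a(dx)}{1-(\frac{1-\beta}{\gamma})^{1-x/a}})\delta_a$; here $\delta_a \to \delta_M$ weakly and the absolutely continuous part converges weakly to $\frac{\beta q(dx)}{1-(\frac{1-\beta}{\gamma})^{1-x/M}}$, so again $p^{a,*} \to p^*$ weakly.

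\textbf{Main obstacle.} The routine parts (weak convergence of $q^a$, continuity of $s_a$, passing to the limit in bounded continuous functionals) are genuinely routine. The one delicate point is the behaviour exactly at the critical threshold: if the limiting integral $\int \frac{\beta q(dx)}{1-(\frac{1-\beta}{\gamma})^{1-x/M}}$ equals $1$, or more subtly if $q(\{M\}) > 0$ so that $q^a \to q$ involves a genuine atom forming at $M$, one has to verify that the two formulas of Lemma \ref{item2} glue together continuously and that no mass is lost or created in the limit. I expect this to be handled, as in the Kingman case, by checking that the total mass $\int p^{a,*}(dx) = 1$ is preserved in the limit (so that the weak limit is still a probability measure and the $\delta_a$-mass converges to the $\delta_M$-mass), which pins down the limit uniquely and forces it to coincide with $p^*$.
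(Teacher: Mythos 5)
Your overall strategy---pass to the limit $a\uparrow M$ in the explicit formulas of Lemma \ref{item2}---is exactly what the paper intends: it offers no further proof, asserting the corollary ``in the same spirit of'' Lemma \ref{item} and Corollary \ref{3king}. Two points in your execution do not hold up, however, and the second is where the actual content of the statement lives. The smaller one: your monotonicity chain starts from the wrong direction. For fixed $x<a$, as $a$ increases $x/a$ decreases, so the exponent $1-x/a$ \emph{increases}; since the base $(1-\beta)/\gamma\in(0,1)$, the integrand $\beta/\bigl(1-((1-\beta)/\gamma)^{1-x/a}\bigr)$ \emph{decreases} pointwise in $a$ (test with $q=\delta_c$, $c<M$: the threshold integral is $\beta/\bigl(1-((1-\beta)/\gamma)^{1-c/a}\bigr)$, decreasing in $a$). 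This is forgivable since only continuity in $a$ is really needed.

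The substantive gap is the claim that the two cases of Lemma \ref{item2} ``match up'' for $a$ near $M$ except at the exact threshold. The integrand blows up at $x=a$, and $q^a(\{a\})=q([a,m_q])$; hence whenever $q$ charges every left neighbourhood of $M$ (i.e.\ $m_q=M$) while $q(\{M\})=0$, the threshold integral equals $+\infty$ for \emph{every} $a<M$, so Case 1 describes every $p^{a,*}$ with $a<M$ even when Case 2 holds at $a=M$ with $\int\beta q(dx)/\bigl(1-((1-\beta)/\gamma)^{1-x/M}\bigr)$ strictly below $1$. This is precisely the condensation regime, and it is the only regime in which Assumption 3 is ever invoked (Theorem \ref{c3} concerns $p_0(\{M\})=q(\{M\})=0$); there your ``Case 2 converges to Case 2'' argument is vacuous. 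What is actually required is to show that $s_a$ converges to the critical value $\frac{1}{M}\log\frac{\gamma}{1-\beta}$ (the normalization $\int\beta q^a(dx)/\bigl(1-\frac{1-\beta}{\gamma}e^{s_ax}\bigr)=1$ forces $s_a<\frac{1}{a}\log\frac{\gamma}{1-\beta}$, while any limit point strictly below the critical value would make the integral eventually $<1$), and then that the atom of $p^{a,*}$ at $a$, of mass $\beta q([a,M])/\bigl(1-\frac{1-\beta}{\gamma}e^{s_aa}\bigr)$---a $0/0$ expression as $a\uparrow M$---converges to $p^*(\{M\})$. Your closing remark about conservation of total mass is the right instinct for recovering that atom once the absolutely continuous part is controlled on each $[0,a']$, $a'<M$, but the convergence of $s_a$ to the critical exponent has to be proved rather than assumed, and your case analysis as written never reaches it.
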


So all three assumptions are satisfied by Lenski model, which guarantees the weak convergence of $(p_i)_{i\geq 0}$ to $p^*$. Together with Corollary \ref{forking} and replacing $a$ by $M$ in Lemma \ref{item2}, we obtain the main result as follows: 
\begin{theorem}\label{lc}
\begin{enumerate}
\item If $\int\frac{\beta q(dx)}{1-(\frac{1-\beta}{\gamma})^{1-x/M}}>1$, then $(p_i)_{i\geq 0}$ converges strongly to $p^*(dx)=\frac{\beta q(dx)}{1-\frac{1-\beta}{\gamma}e^{s_ax}}$, where $s_a$ is the unique solution of equation $1=\int \frac{\beta q(dx)}{1-\frac{1-\beta}{\gamma}e^{s_ax}}.$
\item If $\int\frac{\beta q(dx)}{1-(\frac{1-\beta}{\gamma})^{1-x/M}}\leq 1$, then $(p_i)_{i\geq 0}$ converges at least weakly to $p^*(dx)=\frac{\beta q(dx)}{1-(\frac{1-\beta}{\gamma})^{1-x/M}}+(1-\int\frac{\beta q(dx)}{1-(\frac{1-\beta}{\gamma})^{1-x/M}})\delta_M$.
\end{enumerate}
\end{theorem}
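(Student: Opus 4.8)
The plan is to obtain Theorem~\ref{lc} as a specialization of the general theory: once the Lenski fitness function $w(x,u)=e^{t_u x}$ is shown to satisfy Assumptions~1, 2 and 3, Theorems~\ref{c1}, \ref{c2}, \ref{c3} together with Corollary~\ref{forking} deliver convergence to a unique $p^*$ for every pair $p_0,q$, and the explicit form of $p^*$ is read off from Lemma~\ref{item2} by setting $a=M$, using $q^M=q$ (valid since $m_q\le M$ by Convention~$(*)$) and $p^{M,*}=p^*$.

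First I would record that, by the defining relation $\int e^{t_u x}u(dx)=\gamma$, the selective advantage in the Lenski model is
\[
s(x,u)=\frac{e^{t_u x}}{\int e^{t_u y}u(dy)}=\frac{e^{t_u x}}{\gamma},\qquad x\in\mathcal{M}.
\]
Hence Assumption~1 is equivalent to: $v_{[0,M)}\prec u_{[0,M)}$ forces $t_u\ge t_v$. This follows by monotonicity --- the component relation implies $u\le v$ in the stochastic order, and since $x\mapsto e^{tx}$ is increasing one gets $\int e^{tx}u(dx)\le\int e^{tx}v(dx)$ for every $t\ge 0$; comparing both sides at the level $\gamma$ yields $t_u\ge t_v$. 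This is the Kingman computation rewritten with distribution functions, as indicated above. For Assumption~2 I would make the estimate quantitative: the extra mass $D_u(a)\ge D_v(a)+\epsilon$ enlarges $\int e^{tx}u(dx)$ at fixed $t$ by an amount controlled by $a$ and $\epsilon$ (and by the fixed constants $M,\beta,\gamma$), which translates into $t_uM-t_vM\ge\log c(a,\epsilon)$ for some $c(a,\epsilon)>1$, i.e.\ \eqref{ratio}. Assumption~3 is precisely the content of Lemma~\ref{item2} and the corollary stated after it: the fixed-point equation~\eqref{lenski} has a unique solution $p^{a,*}$ (again by monotonicity), whose two possible forms are given by Lemma~\ref{item2}, and one then checks that $s_a$ depends continuously on $a$, that $q^a$ converges weakly to $q$ as $a\uparrow M$, and that the two regimes glue together in the limit, so that $p^{a,*}$ converges weakly to $p^*$.

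Granting the three assumptions, Theorem~\ref{lc} follows by a case analysis on $(p_0,q)$: Theorem~\ref{c11} handles $q(\{M\})>0$, Theorem~\ref{c1} handles $p_0=\delta_M$, Theorem~\ref{c2} handles $0<p_0(\{M\})<1$ with $q(\{M\})=0$, and Theorem~\ref{c3} handles $p_0(\{M\})=q(\{M\})=0$; in every instance $(p_i)_{i\ge 0}$ converges at least weakly to the same $p^*$, which depends only on $q,M,\beta$ and is, by Lemma~\ref{item2} with $a=M$, the measure in part~(1) when $\int \beta q(dx)/\bigl(1-(\tfrac{1-\beta}{\gamma})^{1-x/M}\bigr)>1$ and the measure in part~(2) otherwise. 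In part~(1) the limit has the form $p^*(dx)=f(x)\,q(dx)$ for a bounded measurable $f$ on $\mathcal{M}$ (namely $f(x)=\beta/(1-\tfrac{1-\beta}{\gamma}e^{s_a x})$), so Corollary~\ref{forking} upgrades the convergence to strong convergence; in part~(2) no such density exists in general, which is why only weak convergence is asserted.

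The step I expect to be the genuine obstacle is the verification of Assumption~3, namely the weak convergence $p^{a,*}\to p^*$ as $a\uparrow M$. The issue is not the algebra of Lemma~\ref{item2} but the possible transition between its two cases: for $a$ just below $M$ the limit may carry an atom at $a$ while at $a=M$ it does not (or conversely), so one must show that the critical value of $\int \beta q^a(dx)/\bigl(1-(\tfrac{1-\beta}{\gamma})^{1-x/a}\bigr)$ crosses $1$ continuously in $a$, control the location and size of the atom as it approaches $M$, and confirm that no mass escapes in the weak limit. A secondary but related point is establishing uniqueness of the solution of \eqref{lenski}, which must be settled --- once more by a monotonicity argument --- before Lemma~\ref{item2} can be invoked.
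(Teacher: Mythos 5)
Your proposal is correct and follows essentially the same route as the paper: verify Assumptions 1--3 for the Lenski fitness $w(x,u)=e^{t_u x}$ (the paper does this by the same distribution-function computation as for Kingman's model, with Assumption 3 supplied by Lemma \ref{item2} and the corollary after it), invoke the general Theorems \ref{c1}, \ref{c11}, \ref{c2}, \ref{c3} for at least weak convergence to the unique $p^*$, and read off its form from Lemma \ref{item2} with $a=M$, upgrading to strong convergence via Corollary \ref{forking} in case (1). The only quibble is a slip of wording in your Assumption 2 step --- the extra mass on $[0,a]$ \emph{decreases} $\int e^{tx}u(dx)$ relative to $\int e^{tx}v(dx)$ at fixed $t$, which is what forces $t_u-t_v$ to be bounded below --- but the conclusion you draw, $s(M,u)/s(M,v)=e^{(t_u-t_v)M}\ge c(a,\epsilon)>1$, is exactly the required inequality.
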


\section{Discussions}
The main generalization in our model is the dependence of the fitness function on the current type distribution. 
Biologically, it means that the fitness of an individual
depends closely on his contemporaries. In other words, macroscopic epistasis enter explicitly into effect. 
That allows us to design different fitness functions according to our purposes,  such as in the case of Lenski experiment. 
Moreover the assumptions made on the fitness function biologically make sense and are easy to verify, which reduces the modelling difficulties of other potential 
biological experiments. In particular the generalization is also a simplification, because the tools employed in our approach such as the stochastic dominance and some convergence notions of (probability) measures are simple but good enough to reveal the 
nature of the asymptotic behavior of type distributions. 

Notice that we always assume $M<\infty. $ However the iteration (\ref{general}) can be well defined with carefully chosen $p_0,q, w$ when $M=\infty$. Kingman \cite{K78}  studied an example in the infinite case.
However, no general dynamics exist and the only thing one can expect is that a certain proportion of the mass of $p_i$ moves to the right as far as possible as $i$ increases.

B\"urger \cite{B89,B98} considered fitness functions which does not depend on current type distribution and gathered some assumptions on the fitness function to make it possible 
to work on $M\leq \infty$.  It turns out that, under appropriate conditions, $(p_i)_{i\geq 0}$ converges strongly 
 to a unique limit distribution. However this generalization does not cover 
 neither Kingman's \textit{Case} $1$ nor the situations with condensation phenomenon. %B\"urger \cite{B89,B98} also studies the model when the mutant type depends on parent's type, which will not be discussed here. 

The Lenski experiment has attracted much attention recently, see for instance a survey by Ycart et al \cite{YHGS} and references therein. 
Another paper \cite{GKWY} builds a very stochastic approach to analyze the experiment and 
our model can be considered as its deterministic counterpart. There are advantages to stay stochastic and 
also to stay deterministic. In \cite{GKWY}, the authors considered a population with finite population $N$. Each time 
a mutation can only affect one individual. There arises the interesting question of studying the random evolution of mutant population size compared to the total population size. Another advantage is that one can ignore the boundedness concern of the type values if we assume each mutation brings very slight changes to the type value. To stay deterministic, one can quickly
have a clear idea on how the two forces, selection and mutation, compete in the game. In particular, the macroscopic epistasis is 
designable and the deterministic general model can cover various other evolutionary models. In view of this comparison, a future research topic would be considering how to merge the stochastic and deterministic models in order to provide a clear global image and also allow dynamical randomnesses.

\section{Acknowledgements} I thank Adrian Gonzalez-Casanova, Noemi Kurt and Anton Wakolbinger for stimulating discussions on Lenski experiment. Thank especially Noemi for her very helpful comments on a draft version.

\end{document}